\DeclareMathOperator{\RE}{Re} 
\begin{document}
\doi{10.1080/0278107YYxxxxxxxx}
 \issn{1563-5066}
\issnp{0278-1077}
\jvol{00} \jnum{00} \jyear{2012} \jmonth{December}
\markboth{S.  Beig  and V. Ravichandran}{Complex Variables and Elliptic Equations}

\articletype{RESEARCH ARTICLE}

\title{Convexity in one direction of convolution and convex combinations of harmonic functions}

\author{Subzar Beig and V. Ravichandran$^{\ast}$\thanks{$^\ast$Corresponding author. Email: vravi68@gmail.com
\vspace{6pt}} \\\vspace{6pt}  {\em{Department of Mathematics, University of Delhi,
Delhi--110 007, India}}\\\vspace{6pt}\received{v3.0 released December 2012} }

\maketitle

\begin{abstract}We show that the convolution of the harmonic function $f=h+\bar{g}$, where $h(z)+\emph{e}^{-2\textit{i}\gamma}g(z)=z/(1-\emph{e}^{\textit{i}\gamma}z)$ having analytic  dilatation $\emph{e}^{\textit{i}\theta} z^n (0\leq\theta<2\pi)$, with the mapping  $f_{a,\alpha}=h_{a,\alpha}+\overline{g}_{a,\alpha}$, where $h_{a,\alpha}(z)=(z/(1+a)-\emph{e}^{\textit{i}\alpha}z^2/2)/(1-\emph{e}^{\textit{i}\alpha}z)^2$, $g_{a,\alpha}(z)=(a  \emph{e}^{2\textit{i}\alpha}z/(1+a)-\emph{e}^{3\textit{i}\alpha}z^2/2)/(1-\emph{e}^{\textit{i}\alpha}z)^2$  is convex in the direction  $-(\alpha+\gamma)$. We also show that the convolution of $f_{a,\alpha}$ with the right half-plane mapping having dilatation $(a-z^2)/(1-az^2)$ is convex in the direction  $-\alpha$. Finally, we introduce a family of univalent harmonic mappings and find out sufficient conditions for convexity along imaginary-axis of the linear combinations of harmonic functions of this family. \medskip

\begin{keywords}Convexity; convexity in one direction;  convolution; dilatation; convex combination
\end{keywords}\medskip

\begin{classcode}Primary 31A05; Secondary 30C45\end{classcode}\medskip
\end{abstract}

\section{Introduction}
The complex-valued harmonic function $f$ on the unit disk $\mathbb{D}=\left\lbrace z\in\mathbb{C}:|z|<1\right\rbrace$ can be written as $f=h+\bar g$, where $h$ and $g$ are analytic functions  and are respectively known as analytic and co-analytic parts of $f$. By Lewy's theorem, the function $f$ is locally univalent and sense-preserving if and only if $h'(z)\neq0$ and the dilatation $\omega(z)=g'(z)/h'(z)$ is bounded by one on $\mathbb{D}$. Let $\mathcal{S}_H$ denote the class of all harmonic, sense-preserving and univalent mappings defined on $\mathbb{D}$ normalized by the conditions $f(0)=0$ and $f_z(0)=1$. Additionally, if the function $f$ satisfies $f_{\bar z}(0)=0$, then the class of such functions is denoted by $\mathcal{S}_H^0$. The sub-classes of $\mathcal{S}_H $ and  $\mathcal{S}_H^0$ consisting of functions mapping $\mathbb{D}$ onto convex domains are respectively denoted by $\mathcal{K}_H $ and  $\mathcal{K}_H^0$. For $0\leq\alpha<2\pi$, let $\mathcal{S}^0(H_\alpha)\subset\mathcal{S}_H^0 $ denote the class of all harmonic functions that maps $\mathbb{D} $ onto $H_\alpha:=\left\lbrace w \in \mathbb{C}:\RE (e^{\textit{i}\alpha}w)>-1/2\right\rbrace$. In \cite{1}, Dorff \textit{ et al.} showed that if $f=h+\bar{g}\in\mathcal{S}^0(H_\alpha),$ then\begin{equation}\label{p2eq1}
 h(z)+e^{-2\textit{i}\alpha}g(z)=\frac{z}{1-e^{\textit{i}\alpha}z }.
 \end{equation}
 A domain  $D$ is said to be \textit{convex in direction ${\theta}$} $(0\leq\theta<2\pi),$ if every line parallel to the line joining $0$ and  $\emph{e}^{\textit{i}\theta}$ lies completely inside or outside the domain $D$. If $\theta=0$, such a domain $D$ is called convex in the horizontal direction (CHD for short). The convolution (or Hadamard product) of two analytic functions $f$, $g:\mathbb{D}\rightarrow\mathbb{C}$ with Taylor series expansions \[f(z)=\sum_{n=1}^{\infty} a_n z^n \quad \text{ and }\quad g(z)=\sum_{n=1}^{\infty} b_n z^n,\] is defined  by $(f*g)(z)=\sum_{n=1}^\infty a_n b_nz^n$, and the harmonic convolution of the functions $f=h+\bar{g}$ and $F=H+\overline{G}$ is defined by $f*F= h*H+ \overline{g*G}.$ Consider the harmonic mapping $f_{a,\alpha}=h_{a,\alpha}+\overline{g}_{a,\alpha}$, $(-1<a<1,0\leq\alpha<2\pi)$, where
  \begin{equation}\label{p2eq2}h_{a,\alpha}(z)=\frac{z/(1+a)-\emph{e}^{\textit{i}\alpha}z^2/2}{(1-\emph{e}^{\textit{i}\alpha}z)^2} \quad \text{ and }\quad g_{a,\alpha}(z)=\frac{a  \emph{e}^{2\textit{i}\alpha}z/(1+a)-\emph{e}^{3\textit{i}\alpha}z^2/2}{(1-\emph{e}^{\textit{i}\alpha}z)^2}.
  \end{equation}
Also, we see from \eqref{p2eq2}  $f_{a,\alpha}(z)=\emph{e}^{-\textit{i}\alpha}f_{a,0}(\emph{e}^{\textit{i}\alpha}z)$. Therefore, $\RE\big(\emph{e}^{\textit{i}\alpha}f_{a,\alpha}(\emph{e}^{\textit{i}\alpha}z)\big)>-1/2$ and hence $f_{a,\alpha}\in\mathcal{S}^0(H_{\alpha})$, as $f_{a,0}$ is a right half-plane mapping.
  Therefore, \eqref{p2eq1} gives
\begin{equation}\label{p2eq3}
  h_{a,\alpha}(z)+\emph{e}^{-2\textit{i}\alpha}g_{a,\alpha}(z)=\frac{z}{1-\emph{e}^{\textit{i}\alpha}z}.
\end{equation}
The convolution of univalent convex harmonic function is not necessarily convex harmonic and it need not even  be univalent. Convexity in one direction of the convolution of mappings in the class $\mathcal{K}_H$ were studied in \cite{1,2,3,9}.
 \begin{lemma}\label{p2lem1}\cite[Theorem 2, p.491]{1}
Let $f_k\in\mathcal{S}^0(H_{\gamma_k}),(k=1,2)$. If $f_1*f_2$ is locally univalent and sense-preserving in $\mathbb{D}$, then $f_1*f_2\in\mathcal{S}_H^0$ and  is convex in the direction $-(\gamma_1+\gamma_2).$\end{lemma}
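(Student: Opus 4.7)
The plan is to apply the Clunie--Sheil-Small shearing theorem, which reduces convexity in a given direction of a locally univalent, sense-preserving harmonic mapping to the analogous property of its analytic shear. To keep the computation clean I would first normalize by a rotation, then evaluate the shear of $f_1*f_2$ in closed form, and finally verify a standard positive-real-part criterion for horizontal convexity.

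For the rotation, set $\tilde f_k(z)=e^{i\gamma_k}f_k(e^{-i\gamma_k}z)$; a coefficient check shows $\tilde f_k\in\mathcal{S}^0(H_0)$, so by \eqref{p2eq1} we have $\tilde h_k+\tilde g_k=z/(1-z)$, and a direct expansion of the Hadamard products yields
\[
(\tilde f_1*\tilde f_2)(z)=e^{i(\gamma_1+\gamma_2)}(f_1*f_2)\bigl(e^{-i(\gamma_1+\gamma_2)}z\bigr).
\]
Because multiplication and precomposition by unimodular factors preserves both the Jacobian and the normalization at $0$, this transports local univalence and sense-preservingness, and converts convexity of $f_1*f_2$ in direction $-(\gamma_1+\gamma_2)$ into convexity of $\tilde f_1*\tilde f_2$ in the horizontal direction. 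So it is enough to treat $\gamma_1=\gamma_2=0$.

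With $h_k+g_k=\psi$, where $\psi(z):=z/(1-z)$, and using the identities $\psi*\psi=\psi$ and $\psi*h=h$ for any analytic $h$ with $h(0)=0$, I would expand $G:=g_1*g_2=(\psi-h_1)*(\psi-h_2)$ and set $H:=h_1*h_2$ to obtain the key identity
\[
H-G \;=\; h_1+h_2-\psi \;=\; \tfrac{1}{2}(\phi_1+\phi_2), \qquad \phi_k:=h_k-g_k.
\]
Differentiating $h_k+g_k=\psi$ and using the dilatation relation $g_k'=\omega_k h_k'$ with $|\omega_k|<1$ gives $(1-z)^2\phi_k'(z)=(1-\omega_k(z))/(1+\omega_k(z))$; the Cayley-type image of $\mathbb{D}$ under $\omega\mapsto(1-\omega)/(1+\omega)$ lies in the right half-plane, so $\RE\bigl((1-z)^2\phi_k'(z)\bigr)>0$ on $\mathbb{D}$. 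Averaging preserves this inequality, hence $\RE\bigl((1-z)^2(H-G)'(z)\bigr)>0$, and the classical Royster--Ziegler criterion then gives that $H-G$ is univalent and maps $\mathbb{D}$ onto a domain convex in the horizontal direction.

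Since $f_1*f_2=H+\overline G$ is locally univalent and sense-preserving by hypothesis, the Clunie--Sheil-Small shearing theorem now produces a univalent harmonic mapping in $\mathcal{S}_H^0$ whose image is CHD; undoing the rotation gives convexity in direction $-(\gamma_1+\gamma_2)$. The main obstacle, I expect, is isolating the clean identity $H-G=(\phi_1+\phi_2)/2$: once this is in hand the positive-real-part property transfers to $H-G$ by mere linearity, but reaching it requires essential use of both the half-plane normalization $h_k+g_k=\psi$ and the idempotent/identity behaviour of $\psi$ under Hadamard convolution---properties that fail for a generic CHD mapping and explain why convolutions of arbitrary CHD harmonic mappings need not be CHD.
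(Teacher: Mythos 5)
The paper itself offers no proof of this lemma---it is imported verbatim from Dorff, Nowak and Wo\l oszkiewicz \cite[Theorem 2]{1}---so the comparison is with the original argument there; judged on its own, your proof is correct, and it follows the same skeleton (Clunie--Sheil-Small shearing plus a directional-convexity criterion of Royster--Ziegler type) while adding two genuine simplifications. Your rotation $\tilde f_k(z)=e^{i\gamma_k}f_k(e^{-i\gamma_k}z)$ does land in $\mathcal{S}^0(H_0)$ with $\tilde h_k+\tilde g_k=z/(1-z)$, and the identity $(\tilde f_1*\tilde f_2)(z)=e^{i(\gamma_1+\gamma_2)}(f_1*f_2)(e^{-i(\gamma_1+\gamma_2)}z)$ checks out on coefficients (including the co-analytic parts, since $\tilde g_k(z)=e^{-i\gamma_k}g_k(e^{-i\gamma_k}z)$); since $\psi(z)=z/(1-z)$ is the convolution identity, $G=(\psi-h_1)*(\psi-h_2)=\psi-h_1-h_2+H$, so $H-G=h_1+h_2-\psi=\tfrac12(\phi_1+\phi_2)$ is exact, and $(1-z)^2\phi_k'=(1-\omega_k)/(1+\omega_k)$ has positive real part because $f_k\in\mathcal{S}_H^0$ forces $|\omega_k|<1$. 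The one place to be more careful is the final citation: Royster and Ziegler state their theorem for convexity in the direction of the \emph{imaginary} axis, in the form $\RE\{-ie^{i\mu}(1-2ze^{-i\mu}\cos\nu+z^2e^{-2i\mu})f'(z)\}\geq0$, so to obtain ``$\RE\big((1-z)^2F'(z)\big)>0$ implies $F$ univalent and convex in the horizontal direction'' you should apply it to $iF$ with $\mu=\nu=0$ (univalence can also be had directly from Noshiro--Warschawski after transplanting to the half-plane $\RE w>-1/2$, where the hypothesis becomes $\RE G'>0$); note in particular that the Hengartner--Schober criterion quoted in this paper (Lemma \ref{p2lem8}, with factor $1-z^2$) would give convexity in the imaginary direction and is \emph{not} the right tool here, so your reaching for the $(1-z)^2$ criterion is the correct move. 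Relative to \cite{1}, which works directly with general $\gamma_1,\gamma_2$---showing in effect $h_1*h_2-e^{-2i(\gamma_1+\gamma_2)}g_1*g_2=\psi_1*h_2+\psi_2*h_1-\psi_1*\psi_2$ with $\psi_k(z)=z/(1-e^{i\gamma_k}z)$ and verifying $\RE\big((1-e^{i(\gamma_1+\gamma_2)}z)^2F'(z)\big)>0$---your preliminary rotation to $\gamma_1=\gamma_2=0$ and the clean averaging identity $H-G=\tfrac12(\phi_1+\phi_2)$ buy a shorter computation at no loss of generality.
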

\begin{lemma}\label{p2lem2}\cite[Theorem 7, p.268]{9}
Let $f_1=h_1+\bar{g}_1$ is a right half-plane mapping given by $h_1+g_1=z/(1-z)$, and for $\pi/2\leq\alpha<\pi$, let $f_2=h_2+\bar{g}_2$ be a strip mapping given by $h_2(z)+g_2(z)=\frac{1}{2\textit{i}\sin\alpha}\log\left(\frac{1+\emph{e}^{\textit{i}\alpha}z}{1-\emph{e}^{\textit{i}\alpha}z}\right)$. If $f_1*f_2$ is locally univalent and sense-preserving, then $f_1*f_2\in\mathcal{S}_H^0$ and is convex in the direction of the real axis.
\end{lemma}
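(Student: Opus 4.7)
The plan is to reduce the problem to an analytic one via the Clunie--Sheil-Small shearing principle: a locally univalent, sense-preserving harmonic mapping $F=H+\overline{G}$ on $\mathbb{D}$ is univalent with horizontally convex image if and only if $H-G$ is a conformal mapping of $\mathbb{D}$ onto a horizontally convex domain. Since local univalence and sense-preservation of $f_1*f_2$ are given by hypothesis, the task reduces to showing that $F_0:=h_1*h_2-g_1*g_2$ is analytic, univalent, and maps $\mathbb{D}$ onto a domain convex in the horizontal direction; once this is established, the shearing theorem yields $f_1*f_2\in\mathcal{S}_H^0$ together with the convexity conclusion.

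The first step is to simplify $F_0$ by exploiting that $K(z):=z/(1-z)=h_1+g_1$ is the identity for Hadamard convolution on the space of power series vanishing at the origin. Setting $L:=h_2+g_2=\frac{1}{2i\sin\alpha}\log\frac{1+e^{i\alpha}z}{1-e^{i\alpha}z}$ and adding and subtracting the cross term $h_1*g_2$ gives
\[
F_0 \;=\; h_1*(h_2+g_2)\;-\;(h_1+g_1)*g_2 \;=\; h_1*L \;-\; g_2,
\]
together with the symmetric identity $F_0 = h_2 - g_1*L$. This removes the ambiguity coming from the unspecified dilatations of $f_1$ and $f_2$ and ties $F_0$ directly to the strip mapping $L$, whose derivative $L'(z)=e^{i\alpha}/\bigl(i\sin\alpha\,(1-e^{2i\alpha}z^2)\bigr)$ is rational with poles at $\pm e^{-i\alpha}$.

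For horizontal convexity of $F_0$ I would invoke the Royster--Ziegler convexity criterion, which reduces convexity in a prescribed direction to a real-part inequality of the form $\RE\bigl[(z-e^{i\mu})(z-e^{i\nu})F_0'(z)/(iz)\bigr]\geq 0$ on $\mathbb{D}$ for suitable unimodular constants $e^{i\mu},e^{i\nu}$. The two zeros $\pm e^{-i\alpha}$ of the denominator of $L'$ dictate the natural choice, whereupon the Royster--Ziegler quadratic cancels the pole in $F_0'$ and leaves a cleaner inequality involving $h_1'$ (equivalently $g_2'$) weighted by the real constant $(\sin\alpha)^{-1}$. The main obstacle is discharging this residual inequality: the hypothesis $\pi/2\leq\alpha<\pi$ enters essentially here because it makes $\sin\alpha>0$ and pins down the sign of $\cos\alpha$, which together align the rotated factors with the real axis. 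Finally, the sense-preservation of $f_1$, combined with $h_1+g_1=K$, forces a Herglotz-type positivity of $h_1'/K'$ that supplies the input needed to close the real-part estimate; applying Clunie--Sheil-Small shearing to the resulting univalent horizontally convex $F_0$ delivers the conclusion.
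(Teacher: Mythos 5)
This lemma is not proved in the paper at all: it is imported verbatim from Dorff \cite[Theorem 7, p.~268]{9}, so your attempt can only be measured against Dorff's original argument. Your opening moves are correct and match the standard opening of any proof of this type: the reduction via Clunie--Sheil-Small shearing (the paper's Lemma \ref{p2lem9} with $\phi=0$), together with the observation that $K(z)=z/(1-z)$ is the convolution identity, so that $F_0=h_1*h_2-g_1*g_2=h_1*L-g_2=h_2-g_1*L$, is exactly right, and the computation $L'(z)=e^{i\alpha}/\big(i\sin\alpha\,(1-e^{2i\alpha}z^2)\big)$ is also correct.

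The gap is in the core step, and it is structural rather than cosmetic. Your mechanism --- ``the Royster--Ziegler quadratic cancels the pole in $F_0'$ and leaves a cleaner inequality involving $h_1'$'' --- rests on a false picture of how convolution interacts with differentiation: $(h_1*L)'$ is \emph{not} $h_1'$ times a rational factor. Using $z(f*g)'=f*(zg')$ and the partial fractions $zL'(z)=\frac{1}{2i\sin\alpha}\big(\frac{1}{1-e^{i\alpha}z}-\frac{1}{1+e^{i\alpha}z}\big)$, together with $h_1*\frac{1}{1-cz}=h_1(cz)$, one finds $z(h_1*L)'(z)=\frac{1}{2i\sin\alpha}\big(h_1(e^{i\alpha}z)-h_1(-e^{i\alpha}z)\big)$; thus $F_0'$ involves the \emph{values} of $h_1$ at two rotated points, not $h_1'(z)$, and multiplying by $(1-e^{2i\alpha}z^2)$ yields (with $u=e^{i\alpha}z$) the quantity $\frac{(1-u^2)(h_1(u)-h_1(-u))}{2u}-\frac{\omega_2(z)}{1+\omega_2(z)}$, an inequality about the odd part of $h_1$ on which the Herglotz fact $\RE\big[(1-z)^2h_1'(z)\big]=\RE\frac{1}{1+\omega_1(z)}>\frac12$ gives no direct purchase: the two positivity statements are taken against different quadratics, $(1-z)^2$ versus $(1-e^{2i\alpha}z^2)$, and no pointwise algebra merges them. (Note also that nothing is ``cancelled'': $F_0'$ is analytic on $\mathbb{D}$, and $\pm e^{-i\alpha}$ are boundary points, not poles of $F_0'$.) Your account of where the hypothesis $\pi/2\leq\alpha<\pi$ enters is likewise vacuous, since $\sin\alpha>0$ for \emph{every} $\alpha\in(0,\pi)$; in Dorff's proof this restriction is used in a genuinely quantitative way, and the lemma is asserted only for that range. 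So you have correctly reduced the lemma to showing that $h_1*L-g_2$ is univalent and convex in the direction of the real axis, but the argument establishing that --- which is the entire content of Dorff's Theorem 7 --- is missing, and the sketched route would fail at the first honest computation.
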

\begin{lemma}\label{p2lem3}\cite[Theorem 1.1]{2}
Let $f\in\mathcal{S}^0(H_{\gamma})$ with dilatation $\omega(z)=\emph{e}^{\textit{i}\theta}z^n$ $(n\in\mathbb{N},\theta\in\mathbb{R})$ and $f_{0,0}=h_{0,0}+\bar{g}_{0,0}$, where $h_{0,0}$, $g_{0,0}$ are  given by \eqref{p2eq2}. If $n=1,2$, then $f_{0,0}*f\in\mathcal{S}_H^0$ and is convex in the direction $-\gamma$.
\end{lemma}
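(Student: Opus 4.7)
Since $f_{0,0}\in\mathcal{S}^0(H_0)$ and $f\in\mathcal{S}^0(H_\gamma)$, Lemma~\ref{p2lem1} reduces the claim to showing that the harmonic convolution $F=f_{0,0}*f=H+\overline{G}$ is locally univalent and sense-preserving on $\mathbb{D}$, i.e.\ that its dilatation $\tilde\omega_F=G'/H'$ satisfies $|\tilde\omega_F(z)|<1$ there. My starting point is the decomposition
\[ h_{0,0}(z)=\tfrac{1}{2}\!\left[\tfrac{z}{1-z}+\tfrac{z}{(1-z)^2}\right],\qquad g_{0,0}(z)=\tfrac{1}{2}\!\left[\tfrac{z}{1-z}-\tfrac{z}{(1-z)^2}\right], \]
combined with the standard convolution identities $\tfrac{z}{1-z}*\phi=\phi$ and $\tfrac{z}{(1-z)^2}*\phi=z\phi'$, which yield $H=\tfrac12(h+zh')$ and $G=\tfrac12(g-zg')$, so that $2H'=2h'+zh''$ and $2G'=-zg''$.

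Differentiating the relation $g'=e^{i\theta}z^n h'$ gives $zg''=e^{i\theta}z^n(nh'+zh'')$, and therefore
\[ \tilde\omega_F(z)=-e^{i\theta}z^n\,\frac{nh'(z)+zh''(z)}{2h'(z)+zh''(z)}. \]
For $n=2$ the numerator and denominator coincide, so $\tilde\omega_F(z)=-e^{i\theta}z^2$ and the bound $|\tilde\omega_F(z)|<1$ on $\mathbb{D}$ is immediate; Lemma~\ref{p2lem1} then closes this case.

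The core of the proof is $n=1$. Differentiating \eqref{p2eq1} and combining with $g'=e^{i\theta}z h'$ gives $h'(z)=[(1+\lambda z)(1-\mu z)^2]^{-1}$, where $\lambda=e^{i(\theta-2\gamma)}$ and $\mu=e^{i\gamma}$. Logarithmic differentiation of $h'$ and reduction to a common denominator simplify the dilatation to
\[ \tilde\omega_F(z)=-e^{i\theta}z\cdot\frac{1+\mu z+2\mu\lambda z^2}{2+\lambda z+\mu\lambda z^2}. \]
With $P(z)=1+\mu z+2\mu\lambda z^2$ and $Q(z)=2+\lambda z+\mu\lambda z^2$, the task is to verify $|zP(z)|<|Q(z)|$ on $\mathbb{D}$. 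Expanding $|Q|^2-|zP|^2$ and using $|\lambda|=|\mu|=1$, I expect the cross-terms to organize into
\[ |Q(z)|^2-|zP(z)|^2=4(1-|z|^2)\bigl[1+|z|^2+|z|^4+(1+|z|^2)\RE(\lambda z)+\RE(\mu\lambda z^2)\bigr]. \]
Since $\RE(\lambda z)\geq -|z|$ and $\RE(\mu\lambda z^2)\geq -|z|^2$, the bracketed quantity is bounded below by $(1-|z|)^2(1+|z|+|z|^2)$, which is strictly positive for $|z|<1$. This gives $|\tilde\omega_F(z)|<1$ on $\mathbb{D}$, and Lemma~\ref{p2lem1} then delivers $F\in\mathcal{S}_H^0$ convex in the direction $-\gamma$.

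The main obstacle I anticipate is the bookkeeping for $n=1$: one must carry out the $|Q|^2-|zP|^2$ expansion carefully enough that the factor $4(1-|z|^2)$ emerges cleanly, after which the real-part estimate is elementary. The $n=2$ case is essentially an algebraic coincidence in the general formula for $\tilde\omega_F$; everything else reduces to bilinear manipulations built on the splitting of $h_{0,0}$ and $g_{0,0}$ above.
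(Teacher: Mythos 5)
Your argument is correct, but note that the paper never proves this lemma at all: it is quoted from \cite[Theorem 1.1]{2} (Li--Ponnusamy), so there is no in-paper proof to match, and what you have produced is a self-contained proof of an imported result. Measured against the paper's adjacent machinery, your first step is literally the $a=0$ case of the proof of Lemma~\ref{p2lem6}: the splitting $h_{0,0}=\tfrac12\bigl(\tfrac{z}{1-z}+\tfrac{z}{(1-z)^2}\bigr)$, $g_{0,0}=\tfrac12\bigl(\tfrac{z}{1-z}-\tfrac{z}{(1-z)^2}\bigr)$, giving $H=\tfrac12(h+zh')$, $G=\tfrac12(g-zg')$ and $\tilde\omega_F=-e^{\textit{i}\theta}z^n(nh'+zh'')/(2h'+zh'')$, is exactly the dilatation relation \eqref{p2eq6} specialized there. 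Where you genuinely diverge is the boundedness step: the paper's own generalization (Theorem~\ref{p2theom2}, whose $a=0$, $n\in\{1,2\}$ case contains this lemma, since $(n-2)/(n+2)\leq 0$ then) disposes of $|\tilde\omega|<1$ by the rotation $w=e^{\textit{i}\gamma}z$ and an appeal to the Cohn's-rule result quoted as Lemma~\ref{p2lem4}, whereas you give a direct modulus estimate. I verified your computations: with $\lambda=e^{\textit{i}(\theta-2\gamma)}$, $\mu=e^{\textit{i}\gamma}$ one indeed gets $P(z)=1+\mu z+2\mu\lambda z^2$, $Q(z)=2+\lambda z+\mu\lambda z^2$, the identity $|Q(z)|^2-|zP(z)|^2=4(1-|z|^2)\bigl[1+|z|^2+|z|^4+(1+|z|^2)\RE(\lambda z)+\RE(\mu\lambda z^2)\bigr]$ is exact, the bracket is bounded below by $(1-|z|)^2(1+|z|+|z|^2)>0$, and the $n=2$ collapse $\tilde\omega_F=-e^{\textit{i}\theta}z^2$ is right. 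Your route buys elementarity and an explicit positivity identity for the Jacobian; the Cohn's-rule route buys generality (all $n$, with $a\in[(n-2)/(n+2),1)$), where a hand expansion of $|Q|^2-|zP|^2$ would become degree-$(n+1)$ bookkeeping. One small point of hygiene: Lemma~\ref{p2lem1} requires local univalence, hence also $H'\neq 0$; this does follow from your work, since the estimate forces $Q\neq 0$ in $\mathbb{D}$ and, for $n=2$, $2h'+zh''=2(1+\lambda\mu z^3)\,h'/\bigl((1+\lambda z^2)(1-\mu z)\bigr)\neq 0$ because $|\lambda\mu z^3|<1$, but it deserves a sentence --- though the paper itself follows the same convention of checking only $|\tilde\omega|<1$.
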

\begin{lemma}\label{p2lem4}\cite[Theorem 2.2]{3}
Let $f_{a,0}=h_{a,0}+\bar{g}_{a,0}$, where $h_{a,\alpha}$, $g_{a,\alpha}$ are  given by \eqref{p2eq2}. If  $f=h+\bar{g}$ is a right half-plane mapping given by $h+g=z/(1-z)$ with dilatation $\omega(z)=\emph{e}^{\textit{i}\theta}z^n$ $(n\in\mathbb{N},\theta\in\mathbb{R})$, then $f_{a,0}*f\in\mathcal{S}_H^0$ and is CHD for $a\in[(n-2)/(n+2),1).$
\end{lemma}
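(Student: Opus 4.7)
My plan is to invoke the shear theorem of Clunie and Sheil-Small: a harmonic map $F=H+\bar G$ (with $H,G$ analytic) is univalent and maps $\mathbb{D}$ onto a CHD domain if and only if $F$ is sense-preserving (equivalently $|\tilde\omega|:=|G'/H'|<1$) and the analytic function $H-G$ is univalent with CHD image. Writing $F=f_{a,0}\ast f=H+\bar G$ with $H=h_{a,0}\ast h$ and $G=g_{a,0}\ast g$, the task therefore splits into these two conditions.

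I would first turn convolution with $h_{a,0}$ and $g_{a,0}$ into differential operators. The decompositions $h_{a,0}(z)=\frac{1}{1+a}\frac{z}{(1-z)^2}-\frac{1}{2}\frac{z^2}{(1-z)^2}$ and $g_{a,0}(z)=\frac{a}{1+a}\frac{z}{(1-z)^2}-\frac{1}{2}\frac{z^2}{(1-z)^2}$, together with the elementary identities $\frac{z}{(1-z)^2}\ast\phi=z\phi'$ and $\frac{z^2}{(1-z)^2}\ast\phi=z\phi'-\phi$, give
\[h_{a,0}\ast\phi=\frac{1-a}{2(1+a)}\,z\phi'+\tfrac{1}{2}\phi,\qquad g_{a,0}\ast\phi=-\frac{1-a}{2(1+a)}\,z\phi'+\tfrac{1}{2}\phi.\]
Applied with $\phi=h$ and $\phi=g$, and using $h+g=z/(1-z)$ (so $h'+g'=(1-z)^{-2}$), they produce
\[H-G=\frac{1-a}{2(1+a)}\cdot\frac{z}{(1-z)^2}+\tfrac{1}{2}(h-g).\]
Differentiating and using $h'-g'=(1-\omega)/((1-z)^2(1+\omega))$ then yields
\[(1-z)^2(H-G)'(z)=\frac{1-a}{2(1+a)}\cdot\frac{1+z}{1-z}+\frac{1}{2}\cdot\frac{1-\omega(z)}{1+\omega(z)}.\]
Since $(1+z)/(1-z)$ has positive real part on $\mathbb{D}$ and $\RE\{(1-\omega)/(1+\omega)\}=(1-|\omega|^2)/|1+\omega|^2>0$ there, the right-hand side has positive real part throughout $\mathbb{D}$. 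A standard characterisation (e.g.\ Hengartner-Schober) of analytic CHD functions then delivers univalence and CHD of $H-G$.

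The real obstacle is the dilatation bound $|\tilde\omega|<1$, which is where the restriction $a\ge(n-2)/(n+2)$ must enter. A short computation using $g'=\omega h'$ and $g''=\omega'h'+\omega h''$ with $\omega(z)=e^{i\theta}z^n$ gives
\[\tilde\omega(z)=e^{i\theta}z^n\cdot\frac{[(n+2)a-n]-(1-a)p(z)}{2+(1-a)p(z)},\qquad p(z)=\frac{zh''(z)}{h'(z)}.\]
The explicit formula $h'(z)=[(1-z)^2(1+\omega(z))]^{-1}$ gives $p(z)=\frac{2z}{1-z}-\frac{n\omega}{1+\omega}$, and clearing denominators presents $\tilde\omega$ as $e^{i\theta}z^n P(z)/Q(z)$ with polynomials $P,Q$ of degree at most $n+1$. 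My plan is to verify $|P(z)|\le|Q(z)|$ on $|z|=1$ and then conclude via the maximum principle. Expanding this boundary inequality leads to a trigonometric expression whose leading coefficient is $(n+2)a-n+2$, non-negative precisely when $a\ge(n-2)/(n+2)$. Establishing the full inequality — most likely via Cohn's rule applied to $Q(z)-e^{i\theta}z^n P(z)$, or via a Blaschke-like factorisation that strips off a $z^n$ factor and bounds the remaining rational function on the unit circle — is where the bulk of the technical work will sit.

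Once both shear conditions are in place, Clunie-Sheil-Small yields $f_{a,0}\ast f\in\mathcal{S}_H^0$ and CHD, completing the argument.
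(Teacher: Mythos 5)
Your first half is sound: the operator identities
\[h_{a,0}*\phi=\frac{1-a}{2(1+a)}\,z\phi'+\tfrac{1}{2}\phi,\qquad g_{a,0}*\phi=-\frac{1-a}{2(1+a)}\,z\phi'+\tfrac{1}{2}\phi\]
are exactly those derived inside Lemma~\ref{p2lem6}, and your positivity argument $\RE\{(1-z)^2(H-G)'\}>0$ does establish that $H-G$ is univalent and CHD. In effect you are reproving, for the case $\gamma_1=\gamma_2=0$, the quoted convolution theorem of Dorff \emph{et al.} (Lemma~\ref{p2lem1}); the paper itself never proves Lemma~\ref{p2lem4} --- it cites it from \cite{3} --- and both \cite{3} and the paper's analogous arguments (Theorems~\ref{p2theom3} and \ref{p2theom4}) simply invoke Lemma~\ref{p2lem1} and reduce everything to the dilatation bound. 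So both routes agree that the entire content of the statement, and in particular the role of the hypothesis $a\in[(n-2)/(n+2),1)$, lies in proving $|\tilde\omega|<1$.

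That is precisely where your proposal has a genuine gap, on two counts. First, your primary plan --- ``verify $|P(z)|\le|Q(z)|$ on $|z|=1$ and conclude via the maximum principle'' --- is vacuous: clearing denominators in your own formula for $\tilde\omega$ yields polynomials satisfying $P(z)=-e^{\textit{i}\theta}Q^*(z)$, where $Q^*(z)=z^{n+1}\overline{Q(1/\bar z)}$, so $|P|=|Q|$ holds \emph{identically} on $|z|=1$ for every $a\in(-1,1)$; the boundary inequality cannot detect the threshold $(n-2)/(n+2)$, and the maximum principle cannot even be applied to $P/Q$ without first knowing $Q\neq0$ in $\mathbb{D}$, which is essentially the claim to be proved (your ``leading coefficient $(n+2)a-n+2$'' heuristic therefore has no purchase). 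Second, the workable alternative you name in passing --- factor $\tilde\omega=-e^{2\textit{i}\theta}z^n\prod(z-z_i)/(1-\bar z_iz)$ with $z_i$ the zeros of $P$ and locate those zeros by Cohn's rule --- is exactly the method of \cite{3} and of Theorems~\ref{p2theom3} and \ref{p2theom4}, but you execute none of it. Note where the hypothesis actually enters: for
\[p(z)=z^{n+1}-az^n+\tfrac{1}{2}(2-n+an)e^{-\textit{i}\theta}z+\tfrac{1}{2}(n-2a-an)e^{-\textit{i}\theta},\]
the very first Cohn condition $|a_0|<|a_{n+1}|$ reads $|n-(n+2)a|<2$, i.e.\ $(n-2)/(n+2)<a<1$; after that one still must account for all $n$ remaining zeros for arbitrary $n$ (in \cite{3} the reduced polynomial admits an explicit factorization), and the endpoint $a=(n-2)/(n+2)$ needs separate treatment, since there $|a_0|=|a_{n+1}|$, Cohn's rule is inapplicable, and one argues instead that a zero on $|z|=1$ contributes only a unimodular M\"obius factor. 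As it stands, your proposal proves the easy half of the lemma and only gestures at the half that carries the hypothesis on $a$.
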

In this paper, we generalize the result in Lemma \ref{p2lem4} by showing that the convolution $f_{a,\alpha}*f$ is convex in the direction $-(\gamma+\alpha)$ of the mappings $f_{a,\alpha}$ as given by \eqref{p2eq2} with $f=h+\bar{g}$, satisfying $ h(z)+e^{-2\textit{i}\gamma}g(z)={z}/{(1-e^{\textit{i}\gamma} z)}$ with the dilatation $\omega(z)=\emph{e}^{\textit{i}\theta}z^n$ $(n\in\mathbb{N},\theta\in\mathbb{R})$. We also find the values of $a$, for the  convolution of $f_{a,\alpha}$ with the right half-plane mapping having dilatations $(a-z^2)/(1-az^2)$ and $-(a-z)^2/(1-az)^2$ to be convex in the direction of $-\alpha$. Finally, we study convex combination of mappings from a family of locally-univalent and sense-preserving mappings $f=h+\bar{g}$ obtained by shearing of $h(z)+g(z)=\big( z(1+z^2)(1+z^4)...(1+z^{2^{n}}+\alpha z^{2^{n-1}})/(1+z^{2^{n+1}})\big)*\log({ 1/(1-z)}), n\in\mathbb{N},\alpha\in[-1,1]$, for different choices of the dilatation $\omega=g'/h'$.
\section{Main Results}
We begin this section with the following lemma, which gives a relation among the dilatations of harmonic mappings and their convolution.

\begin{lemma}\label{p2lem6}
Let the function $f_{a,0}=h_{a,0}+\overline{g}_{a,0}$ be harmonic mapping, where $h_{a,0}$, $g_{a,0}$ are given by \eqref{p2eq2}. If $\omega$ is the dilatation of slanted right half-plane mapping $f_{\gamma}=h_{\gamma}+\overline{g}_{\gamma}\in \mathcal{S}(H_{\gamma})$, then the dilatation $\tilde{\omega}$ of $f_{a,0}*f_{\gamma}$ is given by\begin{equation}\label{p2eq6}\tilde{\omega}(z)=\frac{2\omega(z)(1+\emph{e}^{-2\textit{i}\gamma} \omega(z))(a-\emph{e}^{\textit{i}\gamma}z)+z{\omega}'(z)(a-1)(1-\emph{e}^{\textit{i}\gamma}z)}{2(1-a\emph{e}^{\textit{i}\gamma}z)(1+\emph{e}^{-2\textit{i}\gamma}\omega(z) )+\emph{e}^{-2\textit{i}\gamma}z{\omega}'(z)(a-1)(1-\emph{e}^{\textit{i}\gamma}z)}.\end{equation}
\end{lemma}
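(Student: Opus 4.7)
The plan is to reduce the problem to applying the elementary convolution identities $\frac{z}{1-z}*\phi(z)=\phi(z)$ and $\frac{z}{(1-z)^2}*\phi(z)=z\phi'(z)$, and then to eliminate $h_\gamma,g_\gamma$ in favor of $\omega$ and $\omega'$ using the defining relation of $\mathcal S(H_\gamma)$. A direct calculation from \eqref{p2eq2} gives
\begin{equation*}
h_{a,0}+g_{a,0}=\frac{z}{1-z},\qquad h_{a,0}-g_{a,0}=\frac{1-a}{1+a}\cdot\frac{z}{(1-z)^2},
\end{equation*}
so $h_{a,0}$ and $g_{a,0}$ can be written as half-sum/half-difference of $z/(1-z)$ and $z/(1-z)^2$. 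Convolving with $h_\gamma,g_\gamma$ and applying the two identities above then yields
\begin{equation*}
h_{a,0}*h_\gamma=\tfrac12\!\left(h_\gamma+\tfrac{1-a}{1+a}zh_\gamma'\right),\qquad g_{a,0}*g_\gamma=\tfrac12\!\left(g_\gamma-\tfrac{1-a}{1+a}zg_\gamma'\right).
\end{equation*}

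Next I would differentiate both sides to form the dilatation $\tilde\omega=(g_{a,0}*g_\gamma)'/(h_{a,0}*h_\gamma)'$. A short computation (using $(z\phi')'=\phi'+z\phi''$) gives
\begin{equation*}
\tilde\omega(z)=\frac{2ag_\gamma'(z)-(1-a)zg_\gamma''(z)}{2h_\gamma'(z)+(1-a)zh_\gamma''(z)}.
\end{equation*}
Substituting the dilatation relations $g_\gamma'=\omega h_\gamma'$ and $g_\gamma''=\omega'h_\gamma'+\omega h_\gamma''$ expresses $\tilde\omega$ in terms of $\omega,\omega',h_\gamma',h_\gamma''$ only.

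Finally, I would eliminate $h_\gamma',h_\gamma''$. Differentiating the constraint $h_\gamma(z)+e^{-2i\gamma}g_\gamma(z)=z/(1-e^{i\gamma}z)$ once and combining with $g_\gamma'=\omega h_\gamma'$ yields, with $A:=1+e^{-2i\gamma}\omega$ and $w:=1-e^{i\gamma}z$,
\begin{equation*}
h_\gamma'=\frac{1}{Aw^2},\qquad h_\gamma''=\frac{2e^{i\gamma}}{Aw^3}-\frac{e^{-2i\gamma}\omega'}{A^2w^2}.
\end{equation*}
Plugging these into the numerator and denominator of $\tilde\omega$ and clearing the common factor $1/(A^2w^3)$ produces rational expressions in $\omega,\omega',z$.

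The main obstacle, as usual in convolution-dilatation identities, is the last algebraic simplification. The denominator collapses via the pleasant identity $w+(1-a)e^{i\gamma}z=1-ae^{i\gamma}z$, and the numerator requires the analogous $a\cdot w-(1-a)e^{i\gamma}z=a-e^{i\gamma}z$ together with the crucial cancellation $A-e^{-2i\gamma}\omega=1$, which merges the two $(1-a)z\omega'w$ contributions into a single term. Once these are executed, the numerator and denominator match, up to a common factor of $A^2w^3$, the right-hand side of \eqref{p2eq6}, completing the proof.
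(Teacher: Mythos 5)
Your proposal is correct and follows essentially the same route as the paper's proof: the same half-sum formulas $h_{a,0}*h_\gamma=\tfrac12\bigl(h_\gamma+\tfrac{1-a}{1+a}zh_\gamma'\bigr)$ and $g_{a,0}*g_\gamma=\tfrac12\bigl(g_\gamma-\tfrac{1-a}{1+a}zg_\gamma'\bigr)$, the same quotient $\tilde\omega=(2ag_\gamma'-(1-a)zg_\gamma'')/(2h_\gamma'+(1-a)zh_\gamma'')$, and the same elimination of $h_\gamma',h_\gamma''$ via the relations $g_\gamma'=\omega h_\gamma'$ and $h_\gamma+e^{-2i\gamma}g_\gamma=z/(1-e^{i\gamma}z)$. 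Your only additions are to make explicit what the paper compresses into ``a calculation shows'': the decomposition of $h_{a,0}\pm g_{a,0}$ into $z/(1-z)$ and $z/(1-z)^2$, and the final simplifications $w+(1-a)e^{i\gamma}z=1-ae^{i\gamma}z$, $aw-(1-a)e^{i\gamma}z=a-e^{i\gamma}z$, and $A-e^{-2i\gamma}\omega=1$, all of which check out.
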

\begin{proof}
Let the function $f=h+\bar{g}$ be a harmonic mapping with the dilatation $\omega_1=g'/h'$ and let $f_{a,0}*f=h_{a,0}*h+\overline{g_{a,0}*g}=:h_1+\overline{g}_1$. A calculation shows that\[h_1(z)=h_{a,0}(z)*h(z)=\frac{1}{2}\bigg(h(z)+\frac{1-a}{1+a}zh'(z)\bigg),\] and\[g_1(z)=g_{a,0}(z)*g(z)=\frac{1}{2}\bigg(g(z)-\frac{1-a}{1+a}zg'(z)\bigg),\]and the dilatation $\tilde{\omega}$ of $f_{a,0}*f$ is given by\begin{align}\tilde{\omega_1}(z)&=\frac{g'_1(z)}{h'_1(z)}=\frac{2ag'(z)-(1-a)zg''(z)}{2h'(z)+(1-a)zh''(z)}\notag\\&=\frac{2a\omega_1 h'(z)-(1-a)z(\omega_1(z) h''(z)+\omega'_1(z)h'(z)) }{2h'(z)+(1-a)zh''(z)}.\label{p2eq5}
\end{align}
Since, $f_{\gamma}=h_{\gamma}+\overline{g}_{\gamma}\in \mathcal{S}(H_{\gamma})$ and $\omega$ is its dilatation, we have\begin{equation}\label{p2eq5a}\omega(z) h'_{\gamma}(z)=g_{\gamma}'(z) \quad \text{ and }\quad h_{\gamma}(z)+\emph{e}^{-2\textit{i}\gamma}g_{\gamma}(z)=\frac{z}{1-\emph{e}^{\textit{i}\gamma}z}.
\end{equation}
The above two equations in \eqref{p2eq5a} together gives \begin{align}
h'_{\gamma}(z)&=\frac{1}{(1+\emph{e}^{-2\textit{i}\gamma} \omega(z))(1-\emph{e}^{\textit{i}\gamma}z)^2},\label{p2eq5b}\\ \intertext{and}h''_{\gamma}(z)&=\frac{2(1+\emph{e}^{-2\textit{i}\gamma} \omega(z))\emph{e}^{\textit{i}\gamma}-\emph{e}^{-2\textit{i}\gamma}{\omega}'(z)(1-\emph{e}^{\textit{i}\gamma}z)}{(1+\emph{e}^{-2\textit{i}\gamma} \omega(z))^2(1-\emph{e}^{\textit{i}\gamma}z)^3}.\label{p2eq5c}
\end{align}Now, using the expressions of $h'_\gamma$ and $h''_\gamma$ given by \eqref{p2eq5b} and \eqref{p2eq5c} in place of $h'$ and $h''$ in \eqref{p2eq5} and replacing $\omega_1$ by $\omega$, we get the desired expression for the dilatation  $\tilde{\omega}(z)$ of the convolution $f_{a,0}*f_{\gamma}$.
 \end{proof}
\begin{theorem}\label{p2theom2}
Let the function $f_{a,\alpha}=h_{a,\alpha}+\overline{g}_{a,\alpha}$ be harmonic mapping, where $h_{a,\alpha}$, $g_{a,\alpha}$ are given by \eqref{p2eq2}. If  $\omega=\emph{e}^{\textit{i}\theta}z^n(\theta\in\mathbb{R},n\in\mathbb{N})$ is the  dilatation of slanted right half-plane mapping $f_{\gamma}=h_{\gamma}+\overline{g}_{\gamma}\in \mathcal{S}(H_{\gamma})$, then the function $f_{a,\alpha}*f_{\gamma}\in\mathcal{S}_H^0$ and is convex in the direction $-(\alpha+\gamma)$ for $a\in[(n-2)/(n+2),1).$
\end{theorem}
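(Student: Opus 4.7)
The plan is to reduce the theorem to the $\alpha=\gamma=0$ case, which is exactly Lemma \ref{p2lem4}, by exhibiting $f_{a,\alpha}*f_\gamma$ as a rotation of a convolution $f_{a,0}*f_0$ of the kind handled there. The key ingredients are two rotation identities and a corresponding identity for the harmonic convolution.

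First, a direct inspection of \eqref{p2eq2} gives
\[
 h_{a,\alpha}(z)=e^{-i\alpha}h_{a,0}(e^{i\alpha}z),\qquad g_{a,\alpha}(z)=e^{i\alpha}g_{a,0}(e^{i\alpha}z),
\]
so that $f_{a,\alpha}(z)=e^{-i\alpha}f_{a,0}(e^{i\alpha}z)$. The analogous relation for $f_\gamma$ follows from $h_\gamma+e^{-2i\gamma}g_\gamma=z/(1-e^{i\gamma}z)$: writing $h_\gamma(z)=e^{-i\gamma}h_0(e^{i\gamma}z)$ and $g_\gamma(z)=e^{i\gamma}g_0(e^{i\gamma}z)$, one checks that $h_0+g_0=z/(1-z)$, so $f_0\in\mathcal{S}^0(H_0)$ is a right half-plane mapping, and the chain rule yields its dilatation $\omega_0(z)=e^{i(\theta-(n+2)\gamma)}z^n$, still of the form $e^{i\theta_0}z^n$ required by Lemma \ref{p2lem4}.

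Next I would establish the convolution--rotation identity
\[
 (f_{a,\alpha}*f_\gamma)(z)=e^{-i(\alpha+\gamma)}(f_{a,0}*f_0)\bigl(e^{i(\alpha+\gamma)}z\bigr).
\]
At the level of Taylor coefficients, the $n$th coefficient of $h_{a,\alpha}$ (respectively $h_\gamma$) is $e^{i(n-1)\alpha}$ (respectively $e^{i(n-1)\gamma}$) times that of $h_{a,0}$ (respectively $h_0$), so the $n$th coefficient of $h_{a,\alpha}*h_\gamma$ picks up $e^{i(n-1)(\alpha+\gamma)}$ relative to $h_{a,0}*h_0$, yielding $h_{a,\alpha}*h_\gamma(z)=e^{-i(\alpha+\gamma)}(h_{a,0}*h_0)(e^{i(\alpha+\gamma)}z)$. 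For the co-analytic parts the exponents shift by $(n+1)$, producing $g_{a,\alpha}*g_\gamma(z)=e^{i(\alpha+\gamma)}(g_{a,0}*g_0)(e^{i(\alpha+\gamma)}z)$; after conjugation the two prefactors balance into a common $e^{-i(\alpha+\gamma)}$ outside the bracket, giving the stated identity.

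Finally, Lemma \ref{p2lem4} applied to $f_0$ gives $f_{a,0}*f_0\in\mathcal{S}_H^0$ and CHD for $a\in[(n-2)/(n+2),1)$. The transformation $F(z)\mapsto e^{-i(\alpha+\gamma)}F(e^{i(\alpha+\gamma)}z)$ preserves the normalizations $F(0)=0$, $F_z(0)=1$, $F_{\bar z}(0)=0$ defining $\mathcal{S}_H^0$, and rotates a CHD image into one convex in direction $-(\alpha+\gamma)$, so the theorem follows. The main obstacle I expect is the bookkeeping in the convolution--rotation identity, particularly the differing shifts $e^{i(n-1)\alpha}$ and $e^{i(n+1)\alpha}$ for the analytic and co-analytic parts and the role of conjugation in balancing them. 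As an alternative path that makes use of Lemma \ref{p2lem6} (which the authors prove for this purpose), one may reduce only the $\alpha$ variable by rotation, plug $\omega=e^{i\theta}z^n$ into \eqref{p2eq6}, and substitute $u=e^{i\gamma}z$ to recognize that $|\tilde\omega(z)|$ equals the modulus of the dilatation of $f_{a,0}*f^{*}$ at $u$, for $f^{*}$ a right half-plane mapping with dilatation $e^{i(\theta-(n+2)\gamma)}u^n$; Lemma \ref{p2lem4} then gives $|\tilde\omega(z)|<1$, and Lemma \ref{p2lem1} closes the argument.
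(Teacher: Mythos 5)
Your proposal is correct, and its main route is genuinely different from the paper's. The paper rotates only in $\alpha$: it first proves $(f_{\gamma}*f_{a,\alpha})(z)=e^{-\textit{i}\alpha}(f_{\gamma}*f_{a,0})(e^{\textit{i}\alpha}z)$, hence $\omega_{\gamma,\alpha}(z)=e^{2\textit{i}\alpha}\omega_{\gamma,0}(e^{\textit{i}\alpha}z)$, and then handles $\gamma$ analytically --- it invokes Lemma \ref{p2lem6} for the explicit rational expression of $\omega_{\gamma,0}$, substitutes $\omega(z)=e^{\textit{i}\theta}z^n$, and performs the change of variables $w=e^{\textit{i}\gamma}z$, $\beta=\theta-(n+2)\gamma$ \emph{inside} that expression to recognize $\omega_{\gamma,0}(z)=e^{2\textit{i}\gamma}\omega_{0,0}(w)$, after which Lemma \ref{p2lem4} gives $|\omega_{0,0}|<1$ and Lemma \ref{p2lem1} supplies both the membership in $\mathcal{S}_H^0$ and the direction $-(\alpha+\gamma)$; this is essentially your ``alternative path'' sketched at the end. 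Your primary route instead rotates both parameters at the level of the mappings themselves: the identities $h_{a,\alpha}(z)=e^{-\textit{i}\alpha}h_{a,0}(e^{\textit{i}\alpha}z)$, $g_{a,\alpha}(z)=e^{\textit{i}\alpha}g_{a,0}(e^{\textit{i}\alpha}z)$ and their analogues for $f_{\gamma}$ are correct (and your $f_0$ is a normalized right half-plane mapping with dilatation $e^{\textit{i}(\theta-(n+2)\gamma)}z^n$, admissible for Lemma \ref{p2lem4} since $g_0'(0)=0$ when $n\geq 1$); the coefficient shifts $e^{\textit{i}(k-1)(\alpha+\gamma)}$ on the analytic side and $e^{\textit{i}(k+1)(\alpha+\gamma)}$ on the co-analytic side do balance after conjugation, yielding $(f_{a,\alpha}*f_{\gamma})(z)=e^{-\textit{i}(\alpha+\gamma)}(f_{a,0}*f_0)(e^{\textit{i}(\alpha+\gamma)}z)$, and this lets you transport the \emph{entire} conclusion of Lemma \ref{p2lem4} --- membership in $\mathcal{S}_H^0$ together with CHD --- by a rotation that preserves the normalization and carries direction $0$ to $-(\alpha+\gamma)$. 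Your argument thus bypasses Lemma \ref{p2lem6} and Lemma \ref{p2lem1} entirely and exhibits the theorem as literally a rotation of Lemma \ref{p2lem4}, which is shorter and more conceptual; what the paper's dilatation computation buys in exchange is the general formula \eqref{p2eq6}, which it reuses with different dilatations in Theorems \ref{p2theom3} and \ref{p2theom4}, so the lemma-based detour is not wasted effort in the context of the paper as a whole.
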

\begin{proof}
Since\[h_{a,\alpha}(z)=\frac{z/(1+a)-\emph{e}^{\textit{i}\alpha}z^2/2}{(1-\emph{e}^{\textit{i}\alpha}z)^2} \quad \text{ and }\quad g_{a,\alpha}(z)=\frac{a  \emph{e}^{2\textit{i}\alpha}z/(1+a)-\emph{e}^{3\textit{i}\alpha}z^2/2}{(1-\emph{e}^{\textit{i}\alpha}z)^2},\]therefore we have
\begin{align*}
 (f_{\gamma}*f_{a,\alpha})(z)&=(h_{\gamma}*h_{a,\alpha})(z)+\overline{(g_{\gamma}*g_{a,\alpha})}(z)\\&=h_{\gamma}(z)*\bigg(\frac{z/(1+a)-\emph{e}^{\textit{i}\alpha}z^2/2}{(1-\emph{e}^{\textit{i}\alpha}z)^2}\bigg)\\&\quad+\overline{g_{\gamma}(z)*\bigg(\frac{a  \emph{e}^{2\textit{i}\alpha}z/(1+a)-\emph{e}^{3\textit{i}\alpha}z^2/2}{(1-\emph{e}^{\textit{i}\alpha}z)^2}\bigg)}\\&=h_{\gamma}(z)*\bigg(\emph{e}^{-\textit{i}\alpha}\frac{\emph{e}^{\textit{i}\alpha}z/(1+a)-(\emph{e}^{\textit{i}\alpha}z)^2/2}{(1-\emph{e}^{\textit{i}\alpha}z)^2}\bigg)\\&\quad+\overline{g_{\gamma}(z)*\bigg(\emph{e}^{\textit{i}\alpha}\frac{a  \emph{e}^{\textit{i}\alpha}z/(1+a)-(\emph{e}^{\textit{i}\alpha}z)^2/2}{(1-\emph{e}^{\textit{i}\alpha}z)^2}\bigg)}\\&=\emph{e}^{-\textit{i}\alpha}\bigg(h_{\gamma}(z)*h_{a,0}(z\emph{e}^{\textit{i}\alpha})+\overline{g_{\gamma}(z)*g_{a,0}(z\emph{e}^{\textit{i}\alpha})}\bigg)\\&=\emph{e}^{-\textit{i}\alpha}(h_{\gamma}*h_{a,0}+\overline{g_{\gamma}*g_{a,0}})(z\emph{e}^{\textit{i}\alpha})\\&=\emph{e}^{-\textit{i}\alpha}(f_{\gamma}*f_{a,0})(z\emph{e}^{\textit{i}\alpha}).
\end{align*}
Let $\omega_{\gamma,\alpha}$ be  the dilatation of the function $f_{\gamma}*f_{a,\alpha}$. Therefore, we have by  above equation \begin{equation}\label{p2eq5d}\omega_{\gamma,\alpha}(z)=\emph{e}^{2\textit{i}\alpha}\omega_{\gamma,0}(\emph{e}^{\textit{i}\alpha}z).
\end{equation}
In order to prove the result, by Lemma \ref{p2lem1}, we just need to show that $|\omega_{\gamma,\alpha}(z)|<1$. Equation \eqref{p2eq5d} shows it is enough to prove the result for $\alpha=0$, that is to show $|\omega_{\gamma,0}(z)|<1$.
Now, by Lemma \ref{p2lem6} we have \[\omega_{\gamma,0}(z)=\frac{2\omega(z)(1+\emph{e}^{-2\textit{i}\gamma} \omega(z))(a-\emph{e}^{\textit{i}\gamma}z)+z{\omega}'(z)(a-1)(1-\emph{e}^{\textit{i}\gamma}z)}{2(1-a\emph{e}^{\textit{i}\gamma}z)(1+\emph{e}^{-2\textit{i}\gamma}\omega(z) )+\emph{e}^{-2\textit{i}\gamma}z{\omega}'(z)(a-1)(1-\emph{e}^{\textit{i}\gamma}z)}.\]Substitute $\omega(z)=\emph{e}^{\textit{i}\theta}z^n$ in the above equation, we get \begin{align}
\omega_{\gamma,0}(z)&=\frac{2\emph{e}^{\textit{i}\theta}z^n(1+\emph{e}^{\textit{i}\theta}\emph{e}^{-2\textit{i}\gamma} z^n)(a-\emph{e}^{\textit{i}\gamma}z)+n\emph{e}^{\textit{i}\theta}z^n(a-1)(1-\emph{e}^{\textit{i}\gamma}z)}{2(1-a\emph{e}^{\textit{i}\gamma}z)(1+\emph{e}^{\textit{i}\theta}\emph{e}^{-2\textit{i}\gamma}z^n )+n\emph{e}^{-2\textit{i}\gamma}\emph{e}^{\textit{i}\theta}z^n(a-1)(1-\emph{e}^{\textit{i}\gamma}z)}\notag\\&=-\emph{e}^{2\textit{i}\theta}\emph{e}^{-\textit{i}\gamma}z^n\left(\frac{\splitfrac{ z^{n+1}-a\emph{e}^{-\textit{i}\gamma}z^n+1/2(2-n+an)\emph{e}^{-\textit{i}\theta}\emph{e}^{2\textit{i}\gamma}z}{+1/2(n-2a-an)\emph{e}^{-\textit{i}\theta}\emph{e}^{\textit{i}\gamma}}}{\splitfrac{1/2(n-2a-an)\emph{e}^{\textit{i}\theta}\emph{e}^{-\textit{i}\gamma}z^{n+1}}{+1/2(2-n+an)\emph{e}^{\textit{i}\theta}\emph{e}^{-2\textit{i}\gamma}z^n-a\emph{e}^{\textit{i}\gamma}z+1}}\right).\label{p2eq7}
\end{align}
Put $z\emph{e}^{\textit{i}\gamma}=w$ and $\theta-(n+2)\gamma=\beta$. Then by using  $\eqref{p2eq7}$,  we get
\begin{align}
\omega_{\gamma,0}(z)&=-\emph{e}^{2\textit{i}\gamma}\emph{e}^{2\textit{i}\beta}w^{n}\left(\frac{\splitfrac{w^{n+1}-aw^n+1/2(2-n+an)\emph{e}^{-\textit{i}\beta}w}{+1/2(n-2a-an)\emph{e}^{-\textit{i}\beta}}}{\splitfrac{1/2(n-2a-an)\emph{e}^{\textit{i}\beta}w^{n+1}}{+1/2(2-n+an)\emph{e}^{\textit{i}\beta}w^n-aw+1}}\right)\notag\\&=\emph{e}^{2\textit{i}\gamma}\omega_{0,0}(w),\label{eq7a}
\end{align}
where $\omega_{0,0}(w)$ corresponds to $\theta=\beta.$ By Lemma \ref{p2lem4}, $|\omega_{0,0}(w)|<1$ and hence \eqref{eq7a} gives $|\omega_{\gamma,0}(z)|<1$.
\end{proof}
In the next two theorems, we consider the right half-plane mapping with dilatations $(a-z^2)/(1-az^2)$ and $-(a-z)^2/(1-az)^2$, and examine its convolution properties with the mapping $f_{a,0}=h_{a,0}+\bar{g}_{a,0}$, where $h_{a,0}$, $g_{a,0}$ are given by \eqref{p2eq2}. The proof of these results requires the following lemma due to Cohn.
\begin{lemma}\label{p2lem7}(Cohn's rule)\cite{4}. Given a polynomial $t(z)=a_0+a_1z+...+a_nz^n$ of degree n, let \[t^*(z)=z^n\overline{t}(1/\bar{z})=\bar{a}_n+\bar{a}_{n-1}\bar{z}+...\bar{a}_0\bar{z}^n.\]Denote by $r$ and $s$, the number of zeros of $t(z)$ inside and on the unit circle $|z|=1$ respectively. If $|a_0| < |a_n|$, then\[t_1(z)=\frac{\bar{a}_nt(n)-a_0t^*(z)}{z}\] is of degree $n-1$ and has $r_1=r-1$ and $s_1=s$  number of zeros inside and on  the unit circle $|z|=1$ respectively.
\end{lemma}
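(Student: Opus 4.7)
The plan is to prove Cohn's rule by a direct Rouché-type argument on the polynomial $\bar a_n t(z)-a_0 t^*(z)$, the principal difficulty being the treatment of zeros of $t$ that may lie on the unit circle.

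First I would settle the algebraic claim that $t_1$ is a polynomial of degree exactly $n-1$. The constant term of the numerator is $\bar a_n a_0-a_0\bar a_n=0$, so division by $z$ does yield a polynomial; and the coefficient of $z^n$ in the numerator is $\bar a_n a_n-a_0\bar a_0=|a_n|^2-|a_0|^2$, which is strictly positive by hypothesis.

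For the zero-counting, the key observation is that on $|z|=1$ one has $|t^*(z)|=|z|^n\,|t(1/\bar z)|=|t(z)|$, so wherever $t$ does not vanish on the circle,
\[|\bar a_n t(z)|=|a_n||t(z)|>|a_0||t(z)|=|a_0 t^*(z)|.\]
In the generic case when $t$ has no zeros on $|z|=1$, Rouché's theorem applies immediately and gives $\bar a_n t-a_0 t^*$ the same $r$ interior zeros as $\bar a_n t$. Because the origin is one of these $r$ zeros (and, when $a_0\ne 0$, is not a zero of $t$), dividing by $z$ leaves $t_1$ with $r-1$ zeros in $|z|<1$ and none on the boundary.

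The hard part will be extending this to the case when $t$ has zeros on $|z|=1$. I would handle it by factoring $t=p\,q$, where $p$ is a monic polynomial of degree $s$ carrying exactly the boundary zeros of $t$ (with multiplicities) and $q$ has no zeros on $|z|=1$. Since every root $z_k$ of $p$ satisfies $1/\bar z_k=z_k$, a short computation yields $p^*=\lambda p$ for some unimodular $\lambda$, so $t^*=\lambda p q^*$ and hence
\[\bar a_n t(z)-a_0 t^*(z)=p(z)\bigl(\bar a_n q(z)-a_0\lambda q^*(z)\bigr).\]
The generic Rouché argument can then be carried out on the second factor, which has no boundary zeros. The factor $p$ contributes exactly the $s$ zeros of $t_1$ on $|z|=1$ and no interior ones; the reduced second factor contributes $r-1$ interior zeros by the argument of the previous paragraph applied to $q$ in place of $t$. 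A brief separate check covers the degenerate case $a_0=0$, in which $z=0$ is itself a zero of $t$ counted in $r$ and the identity collapses to $t_1=\bar a_n t(z)/z$. Summing the tallies then yields $r_1=r-1$ and $s_1=s$ as claimed.
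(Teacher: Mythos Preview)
The paper does not prove this lemma at all: it is stated with a citation to Rahman and Schmeisser \cite{4} and used as a black box in the proofs of Theorems \ref{p2theom3}, \ref{p2theom4}, and \ref{p2theom15}. There is therefore no ``paper's own proof'' to compare your attempt against.

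That said, your Rouch\'e argument is sound and is essentially the standard proof. A couple of minor points worth tightening: in the generic case you should note that Rouch\'e gives the numerator $\bar a_n t-a_0 t^*$ exactly $r$ zeros in $|z|<1$ counted with multiplicity, and that dividing out the factor $z$ reduces this count by exactly one regardless of whether the origin is a simple or multiple zero of the numerator. In the boundary-zero case your factorization $t^*=\lambda p\,q^*$ is correct, but you should also verify explicitly (as you implicitly need) that the constant term of $\bar a_n q-a_0\lambda q^*$ vanishes; this follows from $1/p(0)=\overline{p(0)}=\lambda$, since $|p(0)|=1$. With those details filled in the argument is complete.
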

 \begin{theorem}\label{p2theom3}
 Let the function $f=h+\overline{g}$  be the harmonic right hal-plane mapping with $h(z)+g(z)=z/(1-z)$, and the dilatation $\omega(z)=(a-z^2)/(1-az^2),a\in[0,1)$. If the function $f_{a,\alpha}=h_{a,\alpha}+\bar{g}_{a,\alpha}$ is harmonic right half-plane mapping, where $h_{a,\alpha}$, $\bar{g}_{a,\alpha}$ are given by \eqref{p2eq2}, then the function $f_{a,\alpha}*f\in\mathcal{S}_H^0$ and is convex in the direction of real-axis.
 \end{theorem}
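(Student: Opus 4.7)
The hypothesis that $f_{a,\alpha}$ is a harmonic right half-plane mapping pins down $\alpha=0$, since $f_{a,\alpha}\in\mathcal{S}^0(H_\alpha)$ maps $\mathbb{D}$ onto the slanted half-plane $H_\alpha$, which coincides with the genuine right half-plane only when $\alpha=0$. Thus the theorem concerns the convolution $f_{a,0}*f$ of two members of $\mathcal{S}^0(H_0)$, and Lemma \ref{p2lem1} applied with $\gamma_1=\gamma_2=0$ will give both $f_{a,0}*f\in\mathcal{S}_H^0$ and convexity in the direction of the real axis, provided one verifies that $f_{a,0}*f$ is locally univalent and sense-preserving, i.e., that its dilatation $\tilde\omega$ satisfies $|\tilde\omega(z)|<1$ on $\mathbb{D}$.

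To find $\tilde\omega$ I would plug $\gamma=0$ and $\omega(z)=(a-z^2)/(1-az^2)$ into the formula of Lemma \ref{p2lem6}. Using
\[1+\omega(z)=\frac{(1+a)(1-z^2)}{1-az^2},\qquad z\omega'(z)=-\frac{2z^2(1-a)(1+a)}{(1-az^2)^2},\]
a common factor $2(1+a)(1-z)/(1-az^2)^2$ drops out of both numerator and denominator of \eqref{p2eq6}, leaving
\[\tilde\omega(z)=\frac{p^{*}(z)}{p(z)},\qquad p(z)=1+(1-a)z+(a^2-4a+1)z^2+a(a-1)z^3+a^2z^4,\]
with $p^{*}(z)=z^4 p(1/z)$ the reciprocal polynomial. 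Because $p$ has real coefficients, $|p^{*}(z)|=|p(z)|$ on $|z|=1$, so the maximum modulus principle reduces the whole problem to showing that $p$ has no zeros in $\overline{\mathbb{D}}$, equivalently that $p^{*}$ has all four zeros in $\mathbb{D}$.

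I would establish this by applying Cohn's rule (Lemma \ref{p2lem7}) three times. Since $|(p^{*})_0|=a^2<1=|(p^{*})_4|$, one reduction (after pulling off the positive scalar $1-a^2$) yields the cubic $R(z)=-a+(a^2-4a+1)z+(1-a+a^2)z^2+(1+a^2)z^3$. Because $a<1+a^2$, a second reduction produces the quadratic $R_1(z)=C+(1-a^2)^2 z+Az^2$ with $A=1+a^2+a^4$ and $C=a^4-3a^3+a^2-3a+1$; the required strict inequality $|C|<A$ reduces to $(A-C)(A+C)>0$, which holds for $a\in(0,1)$ via the factorizations $A-C=3a(1+a^2)$ and $A+C=(a-1)^2(2a^2+a+2)$. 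A third reduction gives a linear polynomial proportional to $(1+a)^2+(2a^2+a+2)z$, whose unique root has modulus $(1+a)^2/(2a^2+a+2)<1$, since $2a^2+a+2-(1+a)^2=a^2-a+1>0$. Cascading back up through the three applications of Cohn's rule yields $r(p^{*})=4$ and $s(p^{*})=0$.

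The only case left uncovered is $a=0$, at which the factor $A-C=3a(1+a^2)$ vanishes and the Cohn chain degenerates; but direct inspection gives $p(z)=1+z+z^2$ and $p^{*}(z)=z^2(1+z+z^2)$, so after cancellation $\tilde\omega(z)=z^2$, which trivially satisfies $|\tilde\omega(z)|<1$ on $\mathbb{D}$. The principal obstacle I anticipate is the three-stage polynomial bookkeeping and, within it, the nonobvious identity $A+C=(a-1)^2(2a^2+a+2)$, on which the final modulus bound hinges; once that factorization is spotted, the argument closes cleanly.
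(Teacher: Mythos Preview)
Your argument is correct and follows essentially the same route as the paper: reduce to $\alpha=0$, invoke Lemma~\ref{p2lem1}, compute the dilatation of the convolution via Lemma~\ref{p2lem6} as the ratio of a degree-four real polynomial and its reciprocal, and then run Cohn's rule three times down to the linear polynomial $(2a^2+a+2)z+(1+a)^2$. The only cosmetic differences are that your $p$ and $p^{*}$ carry the opposite labels from the paper's (so you locate the zeros of your $p^{*}$ inside $\mathbb{D}$ while the paper locates the zeros of its $p$, which is the same polynomial), and you dispose of $a=0$ by the direct observation $\tilde\omega(z)=z^{2}$ whereas the paper cites Lemma~\ref{p2lem3}.
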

\begin{proof}
 If $a=0$, the result follows from Lemma \ref{p2lem3}, so we consider the case $0<a<1$. By Lemma \ref{p2lem1}, we only need to show that the dilatation of the function $f_{a,\alpha}*f$ is bounded by one in $\mathbb{D}$. From \eqref{p2eq5d}, we see it is enough to prove the result for $\alpha=0$. Let $\tilde{\omega}$ be dilatation of the function $f_{a,0}*f\in\mathcal{S}_H^0$. Setting $\gamma=0$ and  $\omega(z)=(a-z^2)/(1-az^2)$ in Lemma \ref{p2lem6}, we get\begin{align}
 \tilde{\omega}(z)&=\frac{p(z)}{p^*(z)},\label{p27b}\\ \intertext{where}p(z)&=z^4+(1-a)z^3-(4a-a^2-1)z^2+a(a-1)z+a^2\notag\\ \intertext{and} p^*(z)&=z^4\overline{p(1/\overline{z})}.\notag
  \end{align}Therefore, if $z_1,z_2,z_3,z_4$ are zeros of $p(z)$, then $1/z_1,1/z_2,1/z_3,1/z_4$ are zeros of $p^*(z)$, and hence we can write \eqref{p27b} as\[\tilde{\omega}(z)=\frac{(z-z_1)(z-z_2)(z-z_3)(z-z_4)}{(1-\overline{z}_1z)(1-\overline{z}_2z)(1-\overline{z}_3z)(1-\overline{z}_4z)}.\]Thus, in order to show that $|\tilde{\omega}(z)|<1$, it is enough to show that $|z_i|<1,$ $i=1,2,3,4.$\\ Consider the polynomial $q_1$ given by
  \begin{align*}
  q_1(z)&=\frac{p(z)-a^2p^*(z)}{z}\\&=(1-a^4)z^3+(1-a)(1+a^3)z^2+(1+a^2-4a)(1-a^2)z+a(a^2-1)\\&=(1-a^2)p_1(z), \intertext{where}p_1(z)&=(1+a^2)z^3+(1-a+a^2)z^2+(1+a^2-4a)z-a.
\end{align*}
Since $|a|^2<1$, by Cohn's rule, the number of zeros of the polynomial $p_1$ in $\mathbb{D}$ is one less than that of the polynomial $p$.
Again, consider the polynomial $q_2$ given by
\begin{align*}
q_2(z)&=\frac{(1+a^2)p_1(z)+ap_1^*(z)}{z}\\&=(1+a^2+a^4)z^2+(1-2a^2+a^4)z+1-3a+a^2-3a^3+a^4\\&=:p_2(z).
\end{align*}
Since $|a|<|1+a^2|$, by Cohn's rule, the number of zeros of the polynomial $p_2$ in  $\mathbb{D}$ is one less than that of the polynomial $p_1$. Also, consider the polynomial $q_3$ given by
\begin{align*}
q_3(z)&=\frac{(1+a^2+a^4)p_2(z)-(1-3a+a^2-3a^3+a^4)p^*_2(z)}{z}\\&=3a(a-1)^2(1+a^2)[(2+a+2a^2)z+(1+a)^2]\\&=3a(a-1)^2(1+a^2)p_3(z),\\\intertext{where}p_3(z)&=(2+a+2a^2)z+(1+a)^2.
\end{align*}
 Since $|1-3a+a^2-3a^3+a^4|<|1+a^2+a^4|$ for $0< a<1)$, by Cohn's rule the number of zeros of the polynomial $p_3$ in $\mathbb{D}$ is one less than that of the polynomial $p_2$.
Finally, $z=-(1+a)^2/(2+a+2a^2)\in\mathbb{D}$ is zero of the polynomial $p_3$  for $0< a<1$. Therefore, it follows that all the  four zeros of  the polynomial $p$ lies in  $\mathbb{D}$, and hence $|\tilde{\omega}(z)|<1$.
\end{proof}
 \begin{theorem}\label{p2theom4}
 Let the function $f=h+\overline{g}$  be the harmonic right half-plane mapping with $h(z)+g(z)=z/(1-z)$, and the dilatation $\omega(z)=-(a-z)^2/(1-az)^2,a\in[0,1)$. If the function $f_{a,\alpha}=h_{a,\alpha}+\bar{g}_{a,\alpha}$ is harmonic right half-plane mapping, where $h_{a,\alpha}$, $\bar{g}_{a,\alpha}$ are given by \eqref{p2eq2}, then the function $f_{a,\alpha}*f\in\mathcal{S}_H^0$ and is convex in the direction of real-axis.
 \end{theorem}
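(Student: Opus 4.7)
My plan is to mirror the proof of Theorem \ref{p2theom3} as closely as possible. First I would use equation \eqref{p2eq5d} to reduce to the case $\alpha=0$, and dispose of the degenerate case $a=0$ by appealing to Lemma \ref{p2lem3}: at $a=0$ the dilatation becomes $\omega(z)=-z^2=\emph{e}^{\textit{i}\pi}z^2$, which has the form required there with $n=2$. For $a\in(0,1)$ I would let $\tilde\omega$ denote the dilatation of $f_{a,0}*f$, computed from Lemma \ref{p2lem6} with $\gamma=0$ and $\omega(z)=-(a-z)^2/(1-az)^2$, and show $|\tilde\omega(z)|<1$ on $\mathbb{D}$; Lemma \ref{p2lem1} then finishes the argument.

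The computational heart of the proof rests on two observations. The identity $(1-az)^2-(a-z)^2=(1-a^2)(1-z^2)$ gives $1+\omega(z)=(1-a^2)(1-z^2)/(1-az)^2$, and a direct differentiation yields $\omega'(z)=2(a-z)(1-a^2)/(1-az)^3$. Feeding these into the formula of Lemma \ref{p2lem6} and extracting the common factor $2(1-a^2)(1-z)/(1-az)^{k}$ from numerator and denominator, I expect to arrive at
\begin{equation*}
\tilde\omega(z)=\frac{(z-a)\,p(z)}{(1-az)\,p^{*}(z)},\qquad p(z)=z^{3}+b z^{2}+b z+a^{2},\quad b=1+a^{2}-3a,
\end{equation*}
where $p^{*}(z)=a^{2}z^{3}+bz^{2}+bz+1$. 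The factor $(z-a)/(1-az)$ is a Blaschke factor of modulus strictly less than one on $\mathbb{D}$, so it is enough to prove that every zero of the cubic $p$ lies inside $\mathbb{D}$, since then $|p/p^{*}|<1$ there as well.

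To prove the latter I would apply Cohn's rule (Lemma \ref{p2lem7}) twice. Since $|a^{2}|<1$, the first reduction (after pulling out the positive factor $1-a^{2}$) yields $p_{1}(z)=(1+a^{2})z^{2}+bz+b$. A second application requires $|b|<1+a^{2}$, which splits into the two easy estimates $1+a^{2}-3a<1+a^{2}$ and $-(1+a^{2}-3a)<1+a^{2}$; the latter simplifies to $2a^{2}-3a+2>0$, which holds for every real $a$ because its discriminant equals $9-16<0$. Cohn's rule then reduces $p_{1}$, modulo the positive factor $(1+a^{2})-b$, to the linear polynomial $[(1+a^{2})+b]z+b$, whose unique zero $-b/[(1+a^{2})+b]$ lies in $\mathbb{D}$ by the very same quadratic estimate. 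Tracing the reductions backwards shows that all three zeros of $p$ lie in $\mathbb{D}$.

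The main obstacle, as in Theorem \ref{p2theom3}, is the bookkeeping that produces the clean factorization of $\tilde\omega$ as a Blaschke factor times $p/p^{*}$; once that is in hand the Cohn's-rule argument is routine and in fact one step shorter than in the preceding theorem, because separating off the Blaschke factor drops the degree of the relevant polynomial from four to three, and all the inequalities collapse to the single observation that the quadratic $2a^{2}-3a+2$ has negative discriminant.
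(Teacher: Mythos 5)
Your proposal is correct, and although it keeps the paper's skeleton --- reduce to $\alpha=0$ via \eqref{p2eq5d}, dispose of $a=0$ by Lemma \ref{p2lem3}, compute the dilatation from Lemma \ref{p2lem6} with $\gamma=0$, locate polynomial zeros via Cohn's rule (Lemma \ref{p2lem7}), and finish with Lemma \ref{p2lem1} --- its computational core takes a genuinely different and cleaner route. The paper keeps $\tilde\omega$ as a ratio of full quartics, $P(z)=z^4+(1-4a+a^2)z^3+(1-4a+4a^2-a^3)z^2+(-a+4a^2-a^3)z-a^3$ over $P^*(z)$, and then needs three Cohn reductions whose coefficients swell to degree eight in $a$ (e.g.\ $1+a^2+8a^3-15a^4+8a^5+a^6+a^8$), each demanding its own positivity verification. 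Your factorization checks out: expanding $(z-a)(z^3+bz^2+bz+a^2)$ with $b=1+a^2-3a$ reproduces the paper's $P$ exactly, and correspondingly $P^*(z)=(1-az)p^*(z)$, so $\tilde\omega(z)=(z-a)p(z)/\big((1-az)p^*(z)\big)$ as you claim; splitting off the automorphism $(z-a)/(1-az)$ leaves only the cubic $p$, two Cohn steps with coefficients at most quadratic in $a$, and essentially the single observation $2a^2-3a+2>0$. What each approach buys: the paper's direct attack is mechanical and requires no insight into the structure of $P$, at the price of heavy bookkeeping; yours is shorter, drops the degree by one, and makes every inequality transparent. Two small remarks. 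First, your last step is not literally ``the very same quadratic estimate'': positivity of $(1+a^2)+b=2a^2-3a+2$ is, but the zero-location condition $|b|<(1+a^2)+b$ requires, when $b<0$, the equally trivial inequality $1+a^2+2b=3(1-a)^2>0$, which you should record explicitly. Second, your intermediate identities $1+\omega(z)=(1-a^2)(1-z^2)/(1-az)^2$ and $\omega'(z)=2(a-z)(1-a^2)/(1-az)^3$ are correct, and they expose a misprint in the paper's own intermediate display, whose denominator should end in $(1-az)$ rather than $(1-z)$; the paper's final expanded quartic is nonetheless consistent with your factored form.
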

 \begin{proof}
 If $a=0$, the result follows from Lemma \ref{p2lem3}, so we consider the case $0<a<1$. By Lemma \ref{p2lem1}, we only need to show that the dilatation of the function $f_{a,\alpha}*f$ is bounded by one in $\mathbb{D}$. From \eqref{p2eq5d}, we see it is enough to prove the result for $\alpha=0$. Let $\tilde{\omega}$ be the dilatation of the function $f_{a,0}*f\in\mathcal{S}_H^0$. Setting $\gamma=0$ and  $\omega(z)=-(a-z)^2/(1-az)^2$ in Lemma \ref{p2lem6}, we get
\begin{align}
 \tilde{\omega}(z)&=\frac{-(a-z)^3(1+z)+(a-z)z(a-1)(1-az)}{(1-az)^3(1+z)+(a-z)z(a-1)(1-z)}\notag\\&=\frac{z^4+(1-4a+a^2)z^3+(1-4a+4a^2-a^3)z^2+(-a+4a^2-a^3)z-a^3}{-a^3z^4+(-a+4a^2-a^3)z^3+(1-4a+4a^2-a^3)z^2+(1-4a+a^2)z+1}\notag\\&=\frac{p(z)}{p^*(z)},\label{p27c}\\\intertext{where}p(z)&=z^4+(1-4a+a^2)z^3+(1-4a+4a^2-a^3)z^2+(-a+4a^2-a^3)z-a^3\notag
\end{align}
and  $p^*(z)=z^4\overline{p}(1/\overline{z}).$ Therefore, if $z_1,z_2,z_3,z_4$ are zeros of $p(z)$, then $1/z_1,1/z_2,1/z_3,1/z_4$ are zeros of $p^*(z)$, and hence we   can write \eqref{p27c} as\[\tilde{\omega}(z)=\frac{(z-z_1)(z-z_2)(z-z_3)(z-z_4)}{(1-\overline{z}_1z)(1-\overline{z}_2z)(1-\overline{z}_3z)(1-\overline{z}_4z)}.\]We shall show that $|\tilde{\omega}(z)|<1$ by proving that  $z_i\in\mathbb{D}$ for $i=1,2,3,4.$\\ Consider the polynomial $q_1$ given by
  \begin{align*}
  q_1(z)&=\frac{p(z)+a^3p^*(z)}{z}\\&=(1-a^6)z^3+(1-4a+a^2-a^4+4a^5-a^6)z^2\\&\quad{}+(1-4a+4a^2-a^3)(1+a^3)z+(-a+4a^2-4a^4+a^5)\\&=(1-a^2)p_1(z),\\ \intertext{where}p_1(z)&=(1+a^2+a^4)z^3+(1-4a+2a^2-4a^3+a^4)z^2\\&\quad{}+(1-4a+5a^2-4a^3+a^4)z-(a-4a^2+a^3).
\end{align*}
Since $|a|^3<1$, by Cohn's rule the number of zeros of the polynomial $p_1$ in $\mathbb{D}$ is one less than that of the polynomial $p$.
Again, consider the polynomial $q_2$ given by
\begin{align*}
q_2(z)&=\big((1+a^2+a^4)p_1(z)+(a-4a^2+a^3)p_1^*(z)\big)/z\\&=(1+a^2+8a^3-15a^4+8a^5+a^6+a^8)z^2\\&\quad{}+(1-4a+a^2)(1-a+a^2)^2(1+3a+a^2)z\\&\quad{}+(1-3a-2a^2+11a^3-9a^4+11a^5-2a^6-3a^7+a^8)\\&=:p_2(z).
\end{align*}
Since $|a-4a^2+a^3|<|1+a^2+a^4|$ for $0<a<1$, by Cohn's rule the number of zeros of the polynomial $p_2$ in the the $\mathbb{D}$ is one less than that of the polynomial $p_1$.
Also, consider the polynomial $q_3$ given by
 \begin{align*}
q_3(z)&=\frac{1}{z}\big((1+a^2+8a^3-15a^4+8a^5+a^6+a^8)^2p_2(z)\\&\quad{}-(1-3a-2a^2+11a^3-9a^4+11a^5-2a^6-3a^7+a^8)^2p_2^*(z)\big)\\&=3a(a^2-1)^2(1+a^2+a^4)\big(
(2-a-4a^2+16a^3-4a^4-a^5+2a^6)z\\&\quad{}+(1-2a-8a^2+8a^3-8a^4-2a^5+a^6)\big)\\&=3a(a^2-1)^2(1+a^2+a^4)p_3(z),
\end{align*}
where \begin{align*}
p_3(z)&=(2-a-4a^2+16a^3-4a^4-a^5+2a^6)z\\&\quad{}+(1-2a-8a^2+8a^3-8a^4-2a^5+a^6).\end{align*}
For $0<a<1$, both $1+a^2+8a^3-15a^4+8a^5+a^6+a^8$ and $1-3a-2a^2+11a^3-9a^4+11a^5-2a^6-3a^7+a^8$  are positive, and the difference of the  $2nd$ term from the $1st$ term  is $3a(a^2-1)^2(1+a+a^2)$, which is also positive on the interval $(0,1)$. Therefore, $|1-3a-2a^2+11a^3-9a^4+11a^5-2a^6-3a^7+a^8|<|1+a^2+8a^3-15a^4+8a^5+a^6+a^8|$ for $0<a<1$, and hence by Cohn's rule the number of zeros of the polynomial $p_3$ in  $\mathbb{D}$ is one less than that of the polynomial $p_2$. Finally, $|1-2a-8a^2+8a^3-8a^4-2a^5+a^6|<|2-a-4a^2+16a^3-4a^4-a^5+2a^6|$ on $0<a<1$ and hence the zero $z=-(1-2a-8a^2+8a^3-8a^4-2a^5+a^6)/(2-a-4a^2+16a^3-4a^4-a^5+2a^6)$ of the polynomial $p_3(z)$ lies in $\mathbb{D}$. Therefore, all the four zeros of the polynomial $p$ lies in $\mathbb{D}$, and hence $|\tilde{\omega}(z)|<1$.
 \end{proof}
 In the next theorem, we examine the convexity along real axis of the convolution of  the mapping $f_{0,0}=h_{0,0}+\bar{g}_{0,0}$, where $h_{0,0}$, $g_{0,0}$ are given by \eqref{p2eq2}, with the strip mapping instead of right half-plane mapping.
 \begin{theorem}\label{p2theom5}
Let the function $f=h+\overline{g}$  be harmonic  mapping given by \[h(z)+g(z)=\frac{1}{2\textit{i}}\log\left(\frac{1+\textit{i}z}{1-\textit{i}z}\right),\] with the dilatation $\omega(z)=(a-z^2)/(1-az^2),a\in(-1,1)$. If the function $f_{0,0}=h_{0,0}+\bar{g}_{0,0}$ is  harmonic right-half plane mapping, where $h_{0,0}$, $g_{0,0}$ are given by \eqref{p2eq2}, then the function $f*f_{0,0}\in\mathcal{S}_H^0$ and is convex in the direction of real axis.
\end{theorem}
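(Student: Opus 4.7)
The plan is to invoke Lemma \ref{p2lem2} with $f_1 := f_{0,0}$ and $f_2 := f$. First I would verify the hypotheses: $f_{0,0}$ is a right half-plane mapping with $h_{0,0}(z)+g_{0,0}(z)=z/(1-z)$ by \eqref{p2eq3} (take $\alpha=a=0$), while the strip identity $h(z)+g(z)=(2i)^{-1}\log((1+iz)/(1-iz))$ is precisely the case $\alpha=\pi/2$ of the strip normalization in Lemma \ref{p2lem2}, since $\sin(\pi/2)=1$ and $e^{i\pi/2}=i$. Consequently everything reduces to checking that $f*f_{0,0}$ is locally univalent and sense-preserving, i.e.\ that its dilatation $\tilde\omega$ satisfies $|\tilde\omega(z)|<1$ on $\mathbb{D}$.

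To extract a workable formula for $\tilde\omega$, I would specialize the derivation in the proof of Lemma \ref{p2lem6} to $a=0$ in the $f_{a,0}$-factor. This gives $h_1=\tfrac{1}{2}(h+zh')$ and $g_1=\tfrac{1}{2}(g-zg')$, so that $h_1'=h'+\tfrac{1}{2}zh''$ and $g_1'=-\tfrac{1}{2}zg''$; writing $g'=\omega h'$ and $g''=\omega'h'+\omega h''$ yields
\[
\tilde\omega(z) \;=\; -\,\frac{z\omega'(z)+z\omega(z)\,h''(z)/h'(z)}{2+z\,h''(z)/h'(z)}.
\]
From the strip identity, $(1+\omega)h'=1/(1+z^2)$; combined with $1+\omega(z)=(1+a)(1-z^2)/(1-az^2)$ this gives the closed form $h'(z)=(1-az^2)/((1+a)(1-z^4))$. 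Routine differentiation then supplies $h''/h'$ and $\omega'(z)=-2z(1-a^2)/(1-az^2)^2$. Substituting and clearing the common factor $(1-az^2)(1-z^4)$ from numerator and denominator, one arrives at $\tilde\omega(z)=z^2A(z)/((1-az^2)B(z))$ where
\[
A(z)=(1-a^2)(1-z^4)+(a-z^2)(a-2z^2+az^4), \qquad B(z)=1-2az^2+z^4.
\]

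The main obstacle — and really the crux of the argument — is the algebraic identity $A(z)=(1-az^2)B(z)$, verified by expanding both sides to $1-3az^2+(1+2a^2)z^4-az^6$. This unexpected collapse produces the strikingly clean conclusion $\tilde\omega(z)=z^2$, so trivially $|\tilde\omega(z)|<1$ on $\mathbb{D}$, and Lemma \ref{p2lem2} then yields the theorem. In contrast to Theorems \ref{p2theom3} and \ref{p2theom4}, no appeal to Cohn's rule is required here: the dilatation is a pure monomial and the entire $a$-dependence cancels.
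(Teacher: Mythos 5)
Your proposal is correct and takes essentially the same route as the paper: both reduce the theorem to Lemma \ref{p2lem2} by showing the dilatation of $f_{0,0}*f$ equals $z^2$, using the $a=0$ specialization of the computation behind Lemma \ref{p2lem6} together with the relation $(1+\omega(z))h'(z)=1/(1+z^2)$ coming from the strip identity. Your explicit factorization $A(z)=(1-az^2)B(z)$ is simply a bookkeeping of the same cancellation the paper performs when its expression for $\tilde{\omega}$ collapses to $z^2$.
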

\begin{proof}
Since, we have $\omega(z)=g'(z)/h'(z)$ and  $h(z)+g(z)=\frac{1}{2\textit{i}}\log\left(\frac{1+\textit{i}z}{1-\textit{i}z}\right)$, it follows that
\begin{align*}
h'(z)&=\frac{1}{(1+\omega(z))(1+z^2)}\\ \intertext{and} h''(z)&=-\frac{2z(1+\omega(z))+\omega'(z)(1+z^2)}{(1+\omega(z))^2(1+z^2)^2}.
\end{align*}  Using the above expressions for $h'$ and $h''$, by  \eqref{p2eq5} the dilatation $\tilde{\omega}(z)$ of the function $f_{0,0}*f$ reduces to
 \[\tilde{\omega}(z)=-z\frac{\omega'(z)(1+z^2)-2z\omega(z)(1+\omega(z))}{2(1+\omega(z))-\omega'(z)z(1+z^2)}.\]Substituting $\omega(z)=(a-z^2)/(1-az^2)$ in above equation, we get $\tilde{\omega}(z)=z^2$, and hence $|\tilde{\omega}|<1$ on $\mathbb{D}$. The result now  follows from Lemma \ref{p2lem2}.
\end{proof}
\section{Linear Combination of Harmonic mappings.}Before going into the detail in this section, we first introduce a result due to Hengartner and Schober for checking the convexity of analytic functions in the direction of imaginary-axis, and a result due to Clunie and Sheil-small for constructing univalent harmonic mapping convex in given direction. These results will be of interest in this section.
\begin{lemma}\label{p2lem8}\cite[Theorem 1, p.304]{5}
Suppose $f$ is analytic and non-constant mapping in $\mathbb{D}$, then\[\RE\big((1-z^2)f'(z)\big)>0,\quad\text{  }\quad z\in\mathbb{D}\]if and only if
\begin{itemize}
\item[(1)] $f$ is univalent in $\mathbb{D}$
\item[(2)] $f$ is convex in the direction of imaginary axis, and
\item[(3)] there exists sequences $z'_n$ and $z''_n$ converging to $z=1$ and $z=-1$, respectively, such that\[\lim _{n\rightarrow\infty}\RE(f(z'_n))=\sup_{|z|<1}\RE(f(z)),\]
\[\lim _{n\rightarrow\infty}\RE(f(z''_n))=\inf_{|z|<1}\RE(f(z)).\]
\end{itemize}
\end{lemma}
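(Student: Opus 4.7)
The plan is to prove both directions of the equivalence separately. The central object is the analytic function $\phi(z):=(1-z^2)f'(z)$, and the theorem asserts that $\phi$ is a Carath\'eodory function (i.e., $\RE\phi>0$) precisely when $f$ satisfies the listed geometric and normalisation conditions. The factor $(1-z^2)$ is geometrically natural because $z=\pm 1$ are exactly the boundary points at which the extremes of $\RE f$ are approached under hypothesis (3).

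For sufficiency, assume (1)--(3). I would first replace $f$ by the dilation $f_r(z):=f(rz)$ with $r<1$, so that $f_r$ extends analytically across $\partial\mathbb{D}$. A direct parametric computation on the unit circle yields the key identity
\[
\RE\bigl((1-e^{2i\theta})f_r'(e^{i\theta})\bigr) = -2\sin\theta\,\frac{d}{d\theta}\RE f_r(e^{i\theta}).
\]
Condition (2), inherited by $f_r$, makes $f_r(\partial\mathbb{D})$ bound a domain convex in the imaginary direction, and (3) forces the supremum and infimum of $\RE f_r$ to be approached along arcs tending to $z=1$ and $z=-1$. Together these imply that $\theta\mapsto\RE f_r(e^{i\theta})$ is non-increasing on $(0,\pi)$ and non-decreasing on $(-\pi,0)$, so the right-hand side of the identity above is non-negative on $\partial\mathbb{D}$. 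The Poisson integral representation then propagates this to $\RE\bigl((1-z^2)f_r'(z)\bigr)\geq 0$ in $\mathbb{D}$, and the minimum principle for non-constant harmonic functions upgrades the inequality to strict positivity. Passing to the limit $r\to 1^-$ completes the argument.

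For necessity, assume $\RE\phi>0$. The non-vanishing of $f'$ is immediate, so $f$ is locally univalent. To obtain univalence and convexity in the imaginary direction simultaneously, I would fix $c\in\mathbb{R}$ and analyse the level curves $\{z\in\mathbb{D}:\RE f(z)=c\}$: such a curve has tangent direction proportional to $i\,\overline{f'(z)}$, and the positivity of $\RE((1-z^2)f'(z))$ prevents it from forming closed loops or bifurcating inside $\mathbb{D}$, so each level curve is a single simple arc joining boundary to boundary. This yields both global univalence of $f$ and connectedness of $f(\mathbb{D})\cap\{\RE w=c\}$. Condition (3) then follows from the observation that along the real diameter $(-1,1)$, $\RE((1-x^2)f'(x))>0$ together with $1-x^2>0$ forces $\RE f'(x)>0$, so $\RE f(x)$ is strictly increasing on $(-1,1)$ and its extremal values are approached as $x\to\pm 1$.

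The principal obstacle I anticipate is the boundary analysis in the sufficiency direction: conditions (2) and (3) are limiting statements rather than pointwise assertions at $\partial\mathbb{D}$, so the required monotonicity of $\theta\mapsto\RE f_r(e^{i\theta})$ is not immediate from the hypotheses applied to $f$ itself. Bridging this gap, either by exploiting the Riemann map of $f_r(\mathbb{D})$ onto a normalised slit domain or by a direct Carath\'eodory kernel convergence argument for $f_r(\mathbb{D})\to f(\mathbb{D})$ as $r\to 1^-$, will be the most technical part of the proof.
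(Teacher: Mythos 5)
First, a point of reference: the paper does not prove this lemma at all --- it is quoted verbatim from Hengartner and Schober (reference [5], Theorem 1, p.~304), so there is no in-paper proof to compare against, and your proposal has to be judged against the original argument, whose necessity half is indeed the strip-map/level-curve argument you sketch. Your boundary identity $\RE\bigl((1-e^{2i\theta})f_r'(e^{i\theta})\bigr)=-2\sin\theta\,\tfrac{d}{d\theta}\RE f_r(e^{i\theta})$ is correct and is the right skeleton for sufficiency, but the step you flag at the end is a genuine gap, not a technicality: the assertion that condition (2) is ``inherited by $f_r$'' is false in general. Convexity in one direction is \emph{not} hereditary under the dilations $f_r(z)=f(rz)$ --- Goodman and Saff exhibited a function convex in the direction of the imaginary axis for which $f_r(\mathbb{D})$ fails to be convex in that direction for some $r<1$. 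The inheritance does hold for functions satisfying (2) \emph{together with} the normalization (3), but the standard proof of that fact is as a corollary of the very lemma you are proving, so invoking it here is circular. The monotonicity of $\theta\mapsto\RE f_r(e^{i\theta})$ on the two semicircles, with extremes pinned at $z=\pm1$, must be extracted from (2)--(3) for $f$ by a boundary (prime-end or kernel-convergence) analysis, as Hengartner and Schober do; deferring this to a closing remark leaves the implication unproved.

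The necessity direction has two soft spots as well. (i) You assert that each level set $\{z:\RE f(z)=c\}$ is a \emph{single} simple arc; positivity of $\RE((1-z^2)f'(z))$ plus the maximum principle does rule out loops and critical points, but not multiple components, and a disconnected level set is exactly what would destroy convexity in the imaginary direction --- so the crux is asserted rather than proved. The clean route is hiding in your own hypothesis: with $\varphi(z)=\tfrac12\log\frac{1+z}{1-z}$ one has $\varphi'(z)=1/(1-z^2)$, so the hypothesis reads $\RE\bigl(f'(z)/\varphi'(z)\bigr)>0$; hence $\RE f$ is strictly increasing along every preimage of a horizontal line of the strip (these are circular arcs from $-1$ to $1$ foliating $\mathbb{D}$), each such arc meets each level set at most once, and both univalence (close-to-convexity with respect to the strip map) and connectivity of the level sets follow from this foliation. (ii) Your derivation of (3) uses only the real diameter: monotonicity of $\RE f$ on $(-1,1)$ yields $\lim_{x\to1^-}\RE f(x)$, but this limit need not equal $\sup_{|z|<1}\RE f(z)$ for a harmonic function, so the diameter alone produces no admissible sequence $z_n'$. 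You need the monotonicity along \emph{all} the foliating arcs, every one of which terminates at $z=1$: given a point where $\RE f$ is within $1/n$ of the supremum, slide it along its arc toward $z=1$ (where $\RE f$ only increases) to a point within distance $1/n$ of $1$; this yields the required sequences at $\pm1$.
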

\begin{lemma}\label{p2lem9}\cite{6}
A locally univalent harmonic mapping $f=h+\overline{g}$ on $\mathbb{D}$ is univalent mapping of $\mathbb{D}$ onto a domain convex in the direction of $\phi$ if and only if $h-\emph{e}^{2\textit{i}\phi}g$ is univalent analytic mapping of $\mathbb{D}$ onto a  domain convex in the direction of $\phi$.
\end{lemma}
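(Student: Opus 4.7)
The plan is to rotate so that $\phi=0$, then exploit the elementary identity $\IM f = \IM F$ (where $F := h - \emph{e}^{2\textit{i}\phi}g$), and finally analyze $\RE f$ along level curves of $\IM F$. Since a domain $D$ is convex in direction $\phi$ precisely when $\emph{e}^{-\textit{i}\phi}D$ is CHD, replacing $f$ by $\tilde f := \emph{e}^{-\textit{i}\phi}f$ and $F$ by $\tilde F := \emph{e}^{-\textit{i}\phi}F$ reduces the statement to the case $\phi=0$: writing $\tilde f = \tilde h + \overline{\tilde g}$ with $\tilde h = \emph{e}^{-\textit{i}\phi}h$, $\tilde g = \emph{e}^{\textit{i}\phi}g$, one checks that $\tilde F = \tilde h - \tilde g$, so it suffices to prove that $\tilde f = \tilde h + \overline{\tilde g}$ is univalent and CHD iff $\tilde F = \tilde h - \tilde g$ is univalent and CHD, and then the straightforward identity $\IM \tilde f = \IM \tilde F$ is available.

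For the nontrivial forward direction I would assume $F$ is univalent and CHD and that $f$ is locally univalent and sense-preserving, with dilatation $\omega=g'/h'$ satisfying $|\omega|<1$. By the CHD and univalence of $F$, for each $c\in\mathbb{R}$ the level set $\gamma_c := \{z\in\mathbb{D}:\IM F(z)=c\}$ is a (possibly empty) Jordan arc along which $\RE F$ is strictly monotone; parametrize it by $z(t)$ so that $F'(z)\,\dot z>0$. Since $\IM f=\IM F$, the very same $\gamma_c$ is the $f$-preimage of the horizontal line $\{\IM w=c\}$. Differentiating $f=h+\bar g$ along $z(t)$, so that $\dot f = h'(z)\dot z + \overline{g'(z)\dot z}$, and using $F'=h'(1-\omega)$ together with $h'+g'=h'(1+\omega)$, would give
\begin{equation*}
\RE\dot f = \RE\big((h'+g')\dot z\big) = F'(z)\,\dot z\,\RE\!\left(\frac{1+\omega(z)}{1-\omega(z)}\right) = F'(z)\,\dot z\,\frac{1-|\omega(z)|^2}{|1-\omega(z)|^2} > 0,
\end{equation*}
so $\RE f$ is strictly increasing along $\gamma_c$ and $f(\gamma_c)$ is a connected horizontal segment. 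As $c$ ranges over $\mathbb{R}$ these segments sit on distinct horizontal lines (again by $\IM f=\IM F$), yielding univalence of $f$ on $\mathbb{D}$ and CHD of $f(\mathbb{D})$. The converse I would handle by running the same chain in reverse, with the roles of $f$ and $F$ swapped: monotonicity of $\RE f$ along a preimage of a horizontal line, combined with the identity above and $|\omega|<1$, forces monotonicity of $\RE F$ along the same curve.

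The step I expect to be the main obstacle is topological rather than analytic: verifying that $\gamma_c$ is a genuine single Jordan arc crossing $\mathbb{D}$ (rather than disconnected, self-intersecting, or interior-pinching) and that the family $\{\gamma_c\}_{c\in\mathbb{R}}$ foliates $\mathbb{D}$ up to its prime-end boundary. This is where the global hypotheses on $F$ (univalence, CHD, and $F'\neq 0$, the last being automatic from $|\omega|<1$ and $h'\neq 0$) must be combined with a careful boundary-behavior argument in the spirit of Hengartner--Schober (cf.\ Lemma \ref{p2lem8}) to exclude pathological level sets. Once this topological skeleton is in place, the analytic content of the proof collapses to the one-line identity $\RE\big((1+\omega)/(1-\omega)\big)=(1-|\omega|^2)/|1-\omega|^2$ displayed above.
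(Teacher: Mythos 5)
The paper gives no proof of this lemma at all --- it is quoted verbatim from Clunie and Sheil-Small \cite{6} --- so the only meaningful comparison is with the classical shearing argument, and your analytic core is precisely that argument. The reduction to $\phi=0$ is correct (your check $\tilde F=\tilde h-\tilde g=e^{-\textit{i}\phi}F$ works out), the identity $\IM f=\IM F$ is indeed the crux, and the key computation $\RE\dot f=\RE\big((h'+g')\dot z\big)=F'\dot z\,\RE\big((1+\omega)/(1-\omega)\big)=F'\dot z\,(1-|\omega|^2)/|1-\omega|^2>0$ is exactly the pivot of the original proof.

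However, the step you defer as ``the main obstacle'' is where your write-up is genuinely incomplete, and your diagnosis of it is off: no Hengartner--Schober-type boundary analysis (Lemma \ref{p2lem8}) and no foliation or prime-end argument is needed, and looking for one is a wrong turn. Since $F$ is assumed univalent, it is a homeomorphism of $\mathbb{D}$ onto $\Omega=F(\mathbb{D})$, and CHD of $\Omega$ means $\Omega\cap\{\IM w=c\}$ is an open interval (or empty); hence $\gamma_c=F^{-1}\big(\Omega\cap\{\IM w=c\}\big)$ is automatically a single simple analytic arc, parametrized by $z(u)=F^{-1}(u+\textit{i}c)$ with $F'(z)\dot z=1>0$, because $F'=h'(1-\omega)\neq0$. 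Cleaner still is to package the whole forward direction as: set $\Phi=f\circ F^{-1}$ on $\Omega$, note $\IM\Phi(w)=\IM w$ and $\partial_u\RE\Phi=\RE\big((h'+g')/(h'-g')\big)=(1-|\omega|^2)/|1-\omega|^2>0$, so $\Phi$ is injective on each horizontal section and hence on $\Omega$, giving univalence of $f=\Phi\circ F$ and CHD of $f(\mathbb{D})$ in three lines. Your converse sketch also goes through with the same device, but deserves one more sentence than you give it: now $f$ is a homeomorphism onto the CHD domain $f(\mathbb{D})$, so $\gamma_c=f^{-1}(\{\IM w=c\})$ is connected, and it is locally a smooth arc because $\IM F=\IM f$ and $F'\neq0$; along it $F'\dot z$ is real and nonzero (take imaginary parts of $\dot f$), $\RE f$ is strictly monotone by injectivity of $f$ on the arc, and the displayed identity then forces $F'\dot z$ to keep one sign, so $\RE F$ is strictly monotone and $F$ is univalent with CHD image. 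In short: right approach, correct computations, but the proof is unfinished exactly at the point you postponed, and the missing ingredient is the one-line pullback through the homeomorphism $F^{-1}$ (respectively $f^{-1}$), not a delicate boundary-behavior argument.
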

Wang \textit{et al.} gave a sufficient condition of univalency for the convex combination $f_3=tf_1+(1-t)f_2,0\leq t\leq 1$ of two harmonic univalent functions $f_1$ and $f_2$. Indeed, they have proved the following:
\begin{theorem}\label{p2theom6}\cite[Theorem 3, p.455]{7}
If the function $f_i=h_i+\overline{g}_i\in\mathcal{S}_H$ satisfies $h_i(z)+g_i(z)=z/(1-z)$ for $i=1,2$, then the convex combination $f_3=tf_1+(1-t)f_2$, $0\leq t\leq 1$, is univalent and convex in the direction of real axis.
\end{theorem}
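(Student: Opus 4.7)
The plan is to combine the Clunie--Sheil-Small shearing (Lemma \ref{p2lem9}) with a half-plane analysis of the analytic part. Writing $h_3 = t h_1 + (1-t) h_2$ and $g_3 = t g_1 + (1-t) g_2$, the identity $h_3 + g_3 = z/(1-z)$ follows by linearity. Applying Lemma \ref{p2lem9} with $\phi = 0$ reduces the problem to showing that $f_3$ is sense-preserving on $\mathbb{D}$ and that $F := h_3 - g_3$ is univalent analytic on $\mathbb{D}$ with $F(\mathbb{D})$ convex in the direction of the real axis.

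Let $\omega_i = g_i'/h_i'$ be the dilatation of $f_i$, so $|\omega_i| < 1$ on $\mathbb{D}$. Differentiating the identity $h_i + g_i = z/(1-z)$ gives $h_i' = 1/((1+\omega_i)(1-z)^2)$, and a direct computation yields
\[
(h_3 - g_3)'(z)(1-z)^2 = t\,\frac{1-\omega_1(z)}{1+\omega_1(z)} + (1-t)\,\frac{1-\omega_2(z)}{1+\omega_2(z)} =: \Phi(z).
\]
Each summand has positive real part, since $w \mapsto (1-w)/(1+w)$ maps $\mathbb{D}$ onto the right half-plane, so $\RE \Phi > 0$ on $\mathbb{D}$. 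Because $|g_3'/h_3'| < 1$ is equivalent to $\RE((h_3' - g_3')/(h_3' + g_3')) = \RE \Phi > 0$, local univalence of $f_3$ is automatic.

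For the analytic part I would pass to the half-plane $H := \{\xi : \RE \xi > -1/2\}$ via the biholomorphism $\xi = z/(1-z)$, with inverse $z = \xi/(1+\xi)$. Setting $\tilde F(\xi) := F(z(\xi))$, the identities $(1-z)^{-2} = (1+\xi)^2$ and $dz/d\xi = (1+\xi)^{-2}$ combine to give $\tilde F'(\xi) = \Phi(z(\xi))$, of positive real part on the convex domain $H$. By the Noshiro--Warschawski theorem, $\tilde F$ is univalent on $H$, hence $F$ is univalent on $\mathbb{D}$. A short manipulation using $h_3 + g_3 = z/(1-z)$ gives $f_3(z) = \RE(z/(1-z)) + i\,\IM F(z)$, which in $\xi$-coordinates reads $f_3(z(\xi)) = \RE \xi + i\,\IM \tilde F(\xi)$. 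Writing $\tilde F = \tilde u + i\tilde v$, the Cauchy--Riemann equations combined with $\RE \tilde F' > 0$ give $\tilde v_y > 0$ on $H$; thus for each fixed $x > -1/2$, $y \mapsto \tilde v(x, y)$ is a strict bijection onto an interval $(V_-(x), V_+(x))$, and
\[
f_3(\mathbb{D}) = \bigl\{x + iv : x > -1/2,\ V_-(x) < v < V_+(x)\bigr\}.
\]

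The main obstacle is to prove that each horizontal slice $X_c := \{x > -1/2 : V_-(x) < c < V_+(x)\}$ is connected; this is exactly what is needed to force $f_3(\mathbb{D}) \cap \{\IM w = c\}$ to be a connected segment and hence $F(\mathbb{D})$ to be convex in the direction of the real axis. I anticipate settling this by combining two ingredients: first, the identity $F'(z)/\phi'(z) = \Phi(z)$ with $\phi(z) = z/(1-z)$ and $\RE \Phi > 0$ puts $F$ into the close-to-convex class relative to the right half-plane mapping $\phi$, so Kaplan's structure theorem shows that $\mathbb{C} \setminus F(\mathbb{D})$ decomposes as a family of pairwise non-crossing closed half-lines; second, the fact that $\phi(\mathbb{D})$ is a right half-plane whose boundary is vertical forces these half-lines to be horizontal, so that $F(\mathbb{D})$ is convex in the direction of the real axis. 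Lemma \ref{p2lem9} then delivers the conclusion for $f_3$.
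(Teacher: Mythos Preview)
The paper does not prove this statement---it is quoted from Wang--Liu--Li \cite{7} as background for Section~3---so there is no proof here to compare against, and I assess your argument on its own. Your reduction is correct through the identity $\RE\bigl[(1-z)^{2}(h_{3}'-g_{3}')(z)\bigr]=\RE\Phi(z)>0$, the local univalence of $f_{3}$, and the Noshiro--Warschawski univalence of $F=h_{3}-g_{3}$ via the half-plane change of variable. The gap is the final step: you need $F(\mathbb{D})$ convex in the direction of the real axis, and you yourself flag this as ``the main obstacle,'' offering only an outline via Kaplan's structure theorem. That outline does not close the argument. Kaplan's theorem guarantees that the complement of a close-to-convex image can be filled by \emph{some} family of non-crossing half-lines, but nothing in its standard formulation forces those half-lines to be horizontal merely because the associated convex function $\phi(z)=z/(1-z)$ has a vertical boundary; the assertion ``the fact that $\phi(\mathbb{D})$ is a right half-plane whose boundary is vertical forces these half-lines to be horizontal'' is precisely the lemma still to be proved, not a consequence of Kaplan.

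The standard route---and almost certainly the one in \cite{7}---is to invoke directly the well-known criterion (the real-axis rotation of Royster--Ziegler, analogous to Lemma~\ref{p2lem8}): if $F$ is analytic in $\mathbb{D}$ with $F(0)=0$ and $\RE\bigl[(1-z)^{2}F'(z)\bigr]>0$ on $\mathbb{D}$, then $F$ is univalent and convex in the direction of the real axis. Your inequality $\RE\Phi>0$ is exactly this hypothesis, so CHD of $h_{3}-g_{3}$ follows in one line, after which Lemma~\ref{p2lem9} transfers it to $f_{3}$; the half-plane parametrisation of $f_{3}(\mathbb{D})$ you set up then becomes unnecessary. (Incidentally, univalence of $f_{3}$ and convexity in the direction of the \emph{imaginary} axis drop out for free from $h_{3}+g_{3}=z/(1-z)$ being convex, via Lemma~\ref{p2lem9} with $\phi=\pi/2$, so only the real-axis direction genuinely requires the extra lemma.)
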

Kumar \textit{et al.} \cite{8} introduce a locally univalent and sense-preserving harmonic functions $f_{\alpha}=h_{\alpha}+\overline{g}_{\alpha}$  given by $h_{\alpha}(z)+g_{\alpha}(z)=z(1-\alpha z)/(1-z^2)$, $\alpha\in[-1,1]$, with the dilatation $\omega=g'_{\alpha}/h'_{\alpha}\in\mathbb{D}$, and proved the following:
\begin{theorem}\label{p2theom7}\cite[Theorem 2.7]{8}
For $i=1,2$, let the function $f_{\alpha_i}=h_{\alpha_i}+\overline{g}_{\alpha_i}$ be normalized harmonic mapping satisfying $h_{\alpha_i}(z)+g_{\alpha_i}(z)=z(1-{\alpha_i} z)/(1-z^2)$, ${\alpha_i} \in[-1,1]$. If $\omega_1(z)=-z$ and $\omega_2(z)=z$ are the  dilatations respectively of the mappings $f_{\alpha_1}$ and $f_{\alpha_2}$, then their convex combination $f=tf_{\alpha_1}+(1-t)f_{\alpha_2}$, $0\leq t\leq 1$, belongs $\mathcal{S}_H$ and is convex in the direction of imaginary axis provided $\alpha_1\geq\alpha_2.$
\end{theorem}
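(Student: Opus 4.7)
The plan is to invoke the Clunie--Sheil-Small shearing theorem (Lemma \ref{p2lem9}) with $\phi = \pi/2$, for which $e^{2\textit{i}\phi} = -1$. Writing $f = H + \overline{G}$ with $H = th_{\alpha_1} + (1-t)h_{\alpha_2}$ and $G = tg_{\alpha_1} + (1-t)g_{\alpha_2}$, linearity of the normalisation $h_{\alpha_i} + g_{\alpha_i} = z(1-\alpha_i z)/(1-z^{2})$ yields immediately $H + G = z(1-\alpha z)/(1-z^{2})$ with $\alpha = t\alpha_1 + (1-t)\alpha_2 \in [-1,1]$. A short calculation gives $(1-z^{2})(H+G)'(z) = (1-2\alpha z + z^{2})/(1-z^{2})$, whose real part is $(1-|z|^{2})(1+|z|^{2}-2\alpha\RE z)/|1-z^{2}|^{2}$, strictly positive on $\mathbb{D}$ because $1+|z|^{2}-2\alpha\RE z \geq (1-|\RE z|)^{2} + (\IM z)^{2} > 0$ there. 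Lemma \ref{p2lem8} therefore forces $H+G$ to be an analytic, univalent map onto a domain convex in the direction of the imaginary axis.

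What remains is the local univalence of $f$, that is, $|\omega_f(z)| < 1$ on $\mathbb{D}$ for $\omega_f = G'/H'$. From $g'_{\alpha_1} = -z h'_{\alpha_1}$, $g'_{\alpha_2} = z h'_{\alpha_2}$, combined with the explicit shear-derived formulas $h'_{\alpha_1}(z) = (1-2\alpha_1 z + z^{2})/((1-z)^{3}(1+z)^{2})$ and $h'_{\alpha_2}(z) = (1-2\alpha_2 z + z^{2})/((1-z)^{2}(1+z)^{3})$, a direct manipulation brings $\omega_f$, after clearing the common denominator $(1-z^{2})^{3}$, to the form
\[
\omega_f(z) = -\frac{z\,\widetilde{N}(z)}{\widetilde{N}^{*}(z)},\qquad \widetilde{N}(z) = z^{3} + (s-2\alpha)z^{2} + (1-2\alpha s - \epsilon)z + s,
\]
where $s = 2t-1$, $\epsilon = 4t(1-t)(\alpha_1-\alpha_2)$, and $\widetilde{N}^{*}(z) = z^{3}\widetilde{N}(1/z)$ is the reciprocal of the real-coefficient polynomial $\widetilde{N}$. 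Once every zero of $\widetilde{N}$ is shown to lie in $\overline{\mathbb{D}}$, the identity $|\widetilde{N}(e^{\textit{i}\theta})| = |\widetilde{N}^{*}(e^{\textit{i}\theta})|$ on the unit circle, together with $\widetilde{N}^{*}$ being zero-free on $\mathbb{D}$, gives via the maximum modulus principle $|\widetilde{N}(z)/\widetilde{N}^{*}(z)| \leq 1$ on $\overline{\mathbb{D}}$, whence $|\omega_f(z)| \leq |z| < 1$ on $\mathbb{D}$.

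Pinning down the zeros of $\widetilde{N}$ is the main obstacle, and it is at this point that the hypothesis $\alpha_1 \geq \alpha_2$, i.e.\ $\delta := \alpha_1-\alpha_2 \geq 0$, becomes essential. I will apply Cohn's rule (Lemma \ref{p2lem7}) twice. For $t \in (0,1)$ we have $|s|<1$, so a first application is legitimate and, using $\epsilon/(1-s^{2}) = \delta$ and factoring out the positive $1-s^{2}$, reduces the problem to locating the zeros of the quadratic
\[
p(z) = z^{2} + (s\delta - 2\alpha)z + (1-\delta).
\]
For $0<\delta<2$ the constant term satisfies $|1-\delta|<1$, permitting a second application of Cohn's rule, which trims $p$ to the linear polynomial $(2-\delta)z + (s\delta - 2\alpha)$. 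Its sole root, simplified via the identity $2\alpha - s\delta = \alpha_1 + \alpha_2$, equals
\[
z_{*} = \frac{\alpha_1+\alpha_2}{2-\delta} = \frac{B-A}{A+B},\qquad A = 1-\alpha_1 \geq 0,\;\; B = 1+\alpha_2 \geq 0,
\]
and the identity $(A+B)^{2} - (B-A)^{2} = 4AB \geq 0$ delivers $|z_{*}| \leq 1$. Reversing the two Cohn reductions then forces all three zeros of $\widetilde{N}$ into $\overline{\mathbb{D}}$. The degenerate cases $\delta = 0$ (where $p(z) = z^{2} - 2\alpha z + 1$ has roots $\alpha \pm \textit{i}\sqrt{1-\alpha^{2}}$ on $|z|=1$), $\delta = 2$ (which forces $\alpha = s$ and $p(z) = z^{2}-1$), and the endpoints $t \in \{0,1\}$ (where $f$ collapses to a single $f_{\alpha_i}$ treated directly by Lemma \ref{p2lem9}) are handled separately.

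Combining the local univalence of $f$ with the univalence and imaginary-axis convexity of $H+G$, Lemma \ref{p2lem9} delivers $f \in \mathcal{S}_H$ convex in the direction of the imaginary axis, completing the proof.
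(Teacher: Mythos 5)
The paper does not actually prove this statement: Theorem \ref{p2theom7} is quoted verbatim from \cite[Theorem 2.7]{8} and used as a black box (notably in Theorem \ref{p2theom13}, whose dilatation is reduced to the one appearing here). Your blind proof is correct, and it is essentially the same machinery the paper itself deploys for its own analogous results (Theorems \ref{p2theom13}--\ref{p2theom15}): shear the normalisation $h+g$, verify $\RE\big((1-z^2)(h+g)'\big)>0$ so that Lemmas \ref{p2lem8} and \ref{p2lem9} yield univalence and imaginary-axis convexity, then bound the dilatation by writing it as $-z\widetilde{N}/\widetilde{N}^*$ and locating the zeros of $\widetilde{N}$ with Cohn's rule (Lemma \ref{p2lem7}). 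I checked the computations: the cubic $\widetilde{N}(z)=z^3+(s-2\alpha)z^2+(1-2\alpha s-\epsilon)z+s$ is exactly the numerator one gets from $h'_{\alpha_1},h'_{\alpha_2}$, the two Cohn reductions to $z^2+(s\delta-2\alpha)z+(1-\delta)$ and then to $(2-\delta)z+(s\delta-2\alpha)$ are arithmetically right (using $1-s^2=4t(1-t)$ and $2\alpha-s\delta=\alpha_1+\alpha_2$), the hypothesis $\alpha_1\geq\alpha_2$ enters precisely where you say it must ($|1-\delta|<1$ needs $0<\delta<2$), and the root bound $|(\alpha_1+\alpha_2)/(2-\delta)|\leq1$ via $4AB\geq0$ together with your treatment of the degenerate cases $\delta\in\{0,2\}$ and $t\in\{0,1\}$ closes all gaps, since Cohn's rule as stated tracks zeros on the circle as well as inside, so boundary zeros of $\widetilde{N}$ still give $|\tilde{\omega}(z)|\leq|z|<1$.
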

\begin{theorem}\label{p2theom8}\cite[Theorem 2.9]{8}
For $i=1,2$, let the function $f_{\alpha_i}=h_{\alpha_i}+\overline{g}_{\alpha_i}$ be normalized harmonic mapping satisfying $h_{\alpha_i}(z)+g_{\alpha_i}(z)=z(1-{\alpha_i} z)/(1-z^2)$, ${\alpha_i} \in[-1,1]$. Let $\omega_1(z)=-z$ and $\omega_2$ be the dilatations respectively of mappings $f_{\alpha_1}$ and $f_{\alpha_2}$, with $|\omega_2(z)|<1$. Let $f=tf_{\alpha_1}+(1-t)f_{\alpha_2}$, $0\leq t\leq 1$, be convex combination of $f_{\alpha_1}$ and $f_{\alpha_2}$. Then, we have\begin{itemize}
\item[(1)] If $\omega_2(z)=-z^2$ and $\alpha_1>\alpha_2,$ then $f$ is in $\mathcal{S}_H$ and is convex in the direction of imaginary axis.
\item[(2)]If $\omega_2(z)=z^2$ and $|\alpha_1|>|\alpha_2|,$ and $\alpha_1\alpha_2\geq0,$ then $f$ is in $\mathcal{S}_H$ and is convex in the direction of imaginary axis.
\end{itemize}
\end{theorem}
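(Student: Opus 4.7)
The approach combines Lemmas \ref{p2lem8} and \ref{p2lem9}. Write $f=H+\overline{G}$ with $H=th_{\alpha_1}+(1-t)h_{\alpha_2}$ and $G=tg_{\alpha_1}+(1-t)g_{\alpha_2}$; summing the two identities $h_{\alpha_i}(z)+g_{\alpha_i}(z)=z(1-\alpha_i z)/(1-z^2)$ with weights $t$ and $1-t$ gives $H(z)+G(z)=z(1-\alpha z)/(1-z^2)$ where $\alpha=t\alpha_1+(1-t)\alpha_2\in[-1,1]$. Taking $\phi=\pi/2$ in Lemma \ref{p2lem9}, so that $H-e^{2\textit{i}\phi}G=H+G$, reduces the theorem to two tasks: (a) show that $H+G$ is univalent, analytic and convex in the direction of the imaginary axis, and (b) show that $f$ is locally univalent, that is, that the dilatation $\tilde{\omega}=G'/H'$ satisfies $|\tilde{\omega}(z)|<1$ on $\mathbb{D}$.

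For (a) I would apply Lemma \ref{p2lem8}. A direct differentiation gives
\[(1-z^2)(H+G)'(z)=\frac{1+z^2-2\alpha z}{1-z^2},\]
and rationalising with $z=re^{\textit{i}\theta}$ shows that the real part of this expression equals $(1-r^2)(1+r^2-2\alpha r\cos\theta)/|1-z^2|^2$. Since $|\alpha|\le 1$ forces $1+r^2-2\alpha r\cos\theta\ge(1-r)^2>0$ for $r<1$, the real part is strictly positive on $\mathbb{D}$, and Lemma \ref{p2lem8} delivers (a).

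For (b) the dilatation must be computed case by case. Using $(1+\omega_i(z))h'_{\alpha_i}(z)=(1+z^2-2\alpha_i z)/(1-z^2)^2$, substituting $\omega_1(z)=-z$, and placing the two terms over a common denominator, the dilatation in Case (1), where $\omega_2(z)=-z^2$, simplifies to
\[\tilde{\omega}(z)=-z\cdot\frac{t(1+z)(1+z^2-2\alpha_1 z)+(1-t)z(1+z^2-2\alpha_2 z)}{t(1+z)(1+z^2-2\alpha_1 z)+(1-t)(1+z^2-2\alpha_2 z)}.\]
An analogous rational expression arises in Case (2) with $\omega_2(z)=z^2$, in which the factor $(1+z)$ is replaced by $(1+z^2)$ in front of the $t$-term and by $(1-z)$ in front of the $(1-t)$-term. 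My plan to establish $|\tilde{\omega}(z)|<1$ is to rationalise $|\tilde{\omega}|^2-1$ and exhibit the resulting numerator as a non-positive trigonometric polynomial on $|z|=1$, after which the maximum modulus principle extends the bound to the interior. The case-specific hypotheses should control the signs of the dominant cross terms: in Case (1) the quantity $\alpha_1-\alpha_2$ appears as a factor and is positive by assumption, while in Case (2) both $|\alpha_1|>|\alpha_2|$ and $\alpha_1\alpha_2\ge 0$ are needed to make a cross term proportional to $\alpha_1\alpha_2$ combine correctly with one proportional to $|\alpha_1|^2-|\alpha_2|^2$.

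The main obstacle I anticipate is precisely this positivity verification. A natural alternative tool is Cohn's rule (Lemma \ref{p2lem7}), already applied in Theorems \ref{p2theom3} and \ref{p2theom4} above, but the coefficients of the polynomials involved here depend nonlinearly on $\alpha_1$, $\alpha_2$ and $t$, so tracking the case-specific hypotheses through a cascade of Cohn reductions is delicate. Once $|\tilde{\omega}|<1$ is established, combining (a) and (b) via Lemma \ref{p2lem9} simultaneously gives univalence of $f$ (hence $f\in\mathcal{S}_H$) and convexity in the direction of the imaginary axis, completing both cases.
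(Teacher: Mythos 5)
First, note that the paper offers no proof of this statement: it is quoted verbatim from \cite[Theorem 2.9]{8} and later used in Theorem \ref{p2theom14} via a substitution, so your attempt must be measured against the proof in \cite{8} and the analogous arguments the paper does carry out (Theorems \ref{p2theom3}, \ref{p2theom4}, \ref{p2theom15}). Your part (a) is correct and is the standard route: the weighted sum gives $H(z)+G(z)=z(1-\alpha z)/(1-z^2)$ with $\alpha=t\alpha_1+(1-t)\alpha_2\in[-1,1]$, the real-part identity $\RE\big((1-z^2)(H+G)'(z)\big)=(1-r^2)(1+r^2-2\alpha r\cos\theta)/|1-z^2|^2>0$ checks out, and Lemmas \ref{p2lem8} and \ref{p2lem9} (with $\phi=\pi/2$) combine exactly as you say. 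Your Case (1) dilatation formula is also correct, though in Case (2) your verbal description misses a sign: with $\omega_2(z)=+z^2$ one gets $\tilde{\omega}(z)=-z\big(t(1+z^2)A_1(z)-(1-t)z(1-z)A_2(z)\big)/\big(t(1+z^2)A_1(z)+(1-t)(1-z)A_2(z)\big)$, where $A_i(z)=1+z^2-2\alpha_i z$.

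The genuine gap is that your announced strategy for part (b) — rationalise $|\tilde{\omega}|^2-1$ and show it is a non-positive trigonometric polynomial on $|z|=1$, then invoke the maximum principle — is structurally vacuous here. Write $\tilde{\omega}=-z\,N_0(z)/D(z)$ with, in Case (1), $N_0(z)=t(1+z)A_1(z)+(1-t)zA_2(z)$ and $D(z)=t(1+z)A_1(z)+(1-t)A_2(z)$; direct expansion shows $N_0(z)=z^3\overline{D}(1/\bar z)=D^*(z)$ (all coefficients are real), and the same self-inversive relation holds in Case (2) with degree four. Consequently $|\tilde{\omega}(z)|\equiv 1$ on $|z|=1$ for \emph{every} choice of $t,\alpha_1,\alpha_2$, so the boundary quantity you plan to estimate is identically zero and cannot encode the hypotheses $\alpha_1>\alpha_2$, or $|\alpha_1|>|\alpha_2|$ with $\alpha_1\alpha_2\geq 0$; moreover the maximum principle is inapplicable until one first rules out poles of $\tilde{\omega}$ in $\mathbb{D}$, i.e.\ proves that $D=N_0^*$ has no zeros there, equivalently that all zeros of the numerator polynomial $N_0$ lie in $\overline{\mathbb{D}}$. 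That zero-location claim is the entire content of the theorem, it is precisely where the case-specific hypotheses enter, and it is established in \cite{8} by the Cohn-rule cascade (Lemma \ref{p2lem7}) that you set aside as a delicate alternative — it is not an alternative but the necessary mechanism, exactly as in the paper's Theorems \ref{p2theom3}, \ref{p2theom4} and \ref{p2theom15}. As written, your proposal proves (a) but leaves (b), the heart of the theorem, unproven, and its primary plan for (b) would fail if executed.
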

For $\alpha\in[-2(\sqrt{2}-1),2(\sqrt{2}-1)]$, $n\in\mathbb{N}$, we will introduce a  family of locally univalent and sense-preserving harmonic mappings  $f_{\alpha,n}=h_{\alpha,n}+\overline{g}_{\alpha,n}$, given by
\begin{equation}\label{p27d} h_{\alpha,n}(z)+g_{\alpha,n}(z)=\frac{z(1+z^2)(1+z^4)\dots(1+z^{2^n}+\alpha z^{2^{n-1}})}{1+z^{2^{n+1}}}*\log{\frac{1}{1-z}},\quad{}z\in\mathbb{D},
\end{equation}
with the  dilatation $\omega(z)=g'_{\alpha,n}(z)/h'_{\alpha,n}(z)\in\mathbb{D}$.
 In this section, we will study the convexity in the direction of real axis of convex combinations of mappings in this family. First, we check the convexity in the direction of real axis of the functions $f_{\alpha,n}$. Differentiating \eqref{p27d}, we get
\begin{equation}\label{p27e} h'_{\alpha,n}(z)+g'_{\alpha,n}(z)=\frac{(1+z^2)(1+z^4)\dots(1+z^{2^n}+\alpha z^{2^{n-1}})}{1+z^{2^{n+1}}}.\end{equation}
Now, upon putting $z^{n-1}=w$, and using \eqref{p27e}, we see
\begin{align*}
\RE\big((&1-z^2)(h'_{\alpha,n}(z)+g'_{\alpha,n}(z))\big)\\&=\RE\bigg(\frac{(1-z^{2^{n}})(1+z^{2^n}+\alpha z^{2^{n-1}})}{1+z^{2^{n+1}}}\bigg)\\&=\RE\bigg(\frac{(1-w^2)(1+w^2+\alpha w)}{1+w^4}\bigg)\\&=\RE\bigg(\frac{1-w^4+\alpha w-\alpha w^3}{1+w^4}\bigg)\\&=\frac{1-|w|^8+\alpha(1-|w|^6)\RE(w)-\alpha(1-|w|^2)\RE(w^3)}{|1+w^4|^2}\\&=(1-|w|^4)\bigg(\frac{1+|w|^2+|w|^4+|w|^6+\alpha(1+|w|^2+|w|^4)\RE(w)-\alpha\RE(w^3)}{|1+w^4|^2}\bigg)\\&>0,
\end{align*}
if
\begin{align*}
1+|w|^2+&|w|^4+|w|^6+\alpha(1+|w|^2+|w|^4)\RE(w)-\alpha\RE(w^3)\\&=\big((|w|^2+|w|^4)(1+\alpha)\RE(w)\big)+\big((\sqrt{2}-1)^2+|w|^6-\alpha\RE(w^3)\big)\\&\quad+\big(1-(\sqrt{2}-1)^2+\alpha\RE(w)\big)>0
\end{align*}
Since for $\alpha\in[-2(\sqrt{2}-1,2(\sqrt{2}-1)]$,  the first term in the above sum is non-negative, the second and the third terms are positive, therefore \[1+|w|^2+|w|^4+|w|^6+\alpha(1+|w|^2+|w|^4)\RE(w)-\alpha\RE(w^3)>0,\]and hence
\begin{equation}\label{p2eqa}
\RE\big((1-z^2)(h'_{\alpha,n}(z)+g'_{\alpha,n}(z)\big)>0.
\end{equation}
 Therefore, by Lemma \ref{p2lem8}, $h_{\alpha,n}+g_{\alpha,n}$ is analytic and convex in the direction of imaginary axis, and hence  Lemma \ref{p2lem9} implies that the function  $f_{\alpha,n}=h_{\alpha,n}+\overline{g}_{\alpha,n}\in\mathcal{S}_H$ and is convex in the direction of imaginary axis.\

 In the next theorem, we will show that, for the convex combination of the functions  $f_{\alpha,n}$ to be convex in the direction of imaginary axis, it is sufficient for this combination to be  local univalent  and sense-preserving.
\begin{theorem}\label{p2theom10}
For $i=1,2,$ let the function $f_{\alpha_i,n}=h_{\alpha_i,n}+\overline{g}_{\alpha_i,n}$ be  normalized harmonic mapping, satisfying $h_{\alpha_i,n}(z)+g_{\alpha_i,n}(z)=\big( z(1+z^2)(1+z^4)\dots(1+z^{2^n}+\alpha_i z^{2^{n-1}})/(1+z^{2{n+1}})\big)*\log{1/(1-z)}, \alpha_i\in[-2(\sqrt{2}-1),2(\sqrt{2}-1)],n\in\mathbb{N}$ and  $|g'_{\alpha_i,n}/h'_{\alpha_i,n}|<1$ in  $\mathbb{D}$. Then the convex combination $f=tf_{\alpha_1,n}+(1-t)f_{\alpha_2,m},0\leq t\leq 1$ is in $\mathcal{S}_H$ and is convex in the direction of of imaginary axis, provided $f$ is locally univalent and sense-preserving.
\end{theorem}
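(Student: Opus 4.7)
The plan is to combine the Clunie--Sheil-Small shearing criterion (Lemma~\ref{p2lem9}) with the Hengartner--Schober criterion (Lemma~\ref{p2lem8}), exactly as was done for the individual mappings $f_{\alpha,n}$ in the paragraph preceding the theorem statement. Put
$$h:=th_{\alpha_1,n}+(1-t)h_{\alpha_2,m},\qquad g:=tg_{\alpha_1,n}+(1-t)g_{\alpha_2,m},$$
so that $f=h+\overline{g}$. By hypothesis $f$ is locally univalent and sense-preserving, so Lemma~\ref{p2lem9} applied with $\phi=\pi/2$ (so that $e^{2i\phi}=-1$) reduces the problem to showing that the analytic function
$$H(z):=h(z)+g(z)=t\bigl(h_{\alpha_1,n}+g_{\alpha_1,n}\bigr)(z)+(1-t)\bigl(h_{\alpha_2,m}+g_{\alpha_2,m}\bigr)(z)$$
is univalent on $\mathbb{D}$ and maps onto a domain convex in the direction of the imaginary axis.

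Next I would appeal to Lemma~\ref{p2lem8}, which reduces the task further to checking the single pointwise inequality $\operatorname{RE}\bigl((1-z^2)H'(z)\bigr)>0$ on $\mathbb{D}$. Differentiating the decomposition above and multiplying by $(1-z^2)$ gives
$$(1-z^2)H'(z)=t(1-z^2)\bigl(h'_{\alpha_1,n}+g'_{\alpha_1,n}\bigr)(z)+(1-t)(1-z^2)\bigl(h'_{\alpha_2,m}+g'_{\alpha_2,m}\bigr)(z),$$
and \eqref{p2eqa}, applied separately with parameters $(\alpha_1,n)$ and $(\alpha_2,m)$, says that each of the two summands already has strictly positive real part in $\mathbb{D}$. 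Since $t\in[0,1]$, the convex combination inherits positive real part, which is exactly the hypothesis of Lemma~\ref{p2lem8}. That lemma then supplies univalence of $H$ together with convexity in the direction of the imaginary axis (as well as the boundary-limit condition, which is subsumed in the ``if and only if'' statement of the lemma, so requires no separate verification).

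Finally, Lemma~\ref{p2lem9} transfers the analytic conclusion back to the harmonic level: $f=h+\overline{g}$ is univalent and maps $\mathbb{D}$ onto a domain convex in the direction of the imaginary axis, and in particular $f\in\mathcal{S}_H$. The argument is entirely formal once the framework established just before the theorem is in place; there is no serious obstacle beyond observing that both the operation $F\mapsto(1-z^2)F'$ and the class of functions with positive real part are preserved under convex combinations, so I would expect the proof in the paper to be only a few lines long.
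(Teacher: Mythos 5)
Your proposal is correct and follows essentially the same route as the paper: the paper likewise sets $F=t(h_{\alpha_1,n}+g_{\alpha_1,n})+(1-t)(h_{\alpha_2,n}+g_{\alpha_2,n})$, uses the already-established inequality \eqref{p2eqa} together with linearity to get $\RE\big((1-z^2)F'(z)\big)>0$, and then invokes Lemma~\ref{p2lem8} followed by Lemma~\ref{p2lem9} (with $\phi=\pi/2$, so $h-e^{2i\phi}g=h+g$) exactly as you do. If anything, your version is slightly more careful than the paper's, since you apply \eqref{p2eqa} with the two possibly distinct parameters $(\alpha_1,n)$ and $(\alpha_2,m)$ as in the theorem statement, while the paper's proof silently takes $m=n$.
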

\begin{proof}
We have  $f=tf_{\alpha_1,n}+(1-t)f_{\alpha_2,n}=th_{\alpha_1,n}+(1-t)h_{\alpha_2,n}+\overline{tg_{\alpha_1,n}+(1-t)g_{\alpha_2,n}}$. Let $F=th_{\alpha_1,n}+(1-t)h_{\alpha_2,n}+tg_{\alpha_1,n}+(1-t)g_{\alpha_2,n}$,   and   $F_{\alpha_i,n}=h_{\alpha_i,n}+g_{\alpha_i,n}$, $i=1,2$. Now by using $\eqref{p2eqa}$, we have
\begin{align*}
\RE[(1-z^2)F'(z)]&=\RE[(1-z)^2(tF'_{\alpha_1,n}(z)+(1-t)F'_{\alpha_2,n}(z))]\\&=t\RE[(1-z)^2F'_{\alpha_1,n}(z)]+(1-t)\RE[(1-z)^2F'_{\alpha_2,n}(z)]>0.
\end{align*}
 Therefore, by Lemma \ref{p2lem8}, the function  $F=th_{\alpha_1,n}+(1-t)h_{\alpha_2,n}+tg_{\alpha_1,n}+(1-t)g_{\alpha_2,n}$ is analytic and convex in the direction of imaginary axis, and hence  Lemma \ref{p2lem9} shows that the function  $f=tf_{\alpha_1,n}+(1-t)f_{\alpha_2,n}\in\mathcal{S}_H$ and is convex in the direction of imaginary axis.
\end{proof}
\begin{lemma}\label{p2theom11}
For $i=1,2$ and $n\in\mathbb{N},$ let the function $f_{\alpha_i,n}=h_{\alpha_i,n}+\overline{g}_{\alpha_i,n}$ be the normalized harmonic mapping, such that $h_{\alpha_i,n}(z)+g_{\alpha_i,n}(z)=\big(z(1+z^2)(1+z^4)\dots(1+z^{2^n}+\alpha_i z^{2^{n-1}})/(1+z^{2{n+1}})\big)*\log{1/(1-z)}$, $\alpha_i\in[-2(\sqrt{2}-1),2(\sqrt{2}-1)] $ and $ \omega_i=g'_{\alpha_i,n}/h'_{\alpha_i,n},$ with $ |\omega_i(z)|<1$ in $\mathbb{D}$. Then for $n\geq m,$ the dilatation $\tilde{\omega}$ of the convex combination $f=tf_{\alpha_1,n}+(1-t)f_{\alpha_2,m}$, $0\leq t\leq 1$  is given by
\begin{equation}\label{p2eq9}
\tilde{\omega}(z)=\frac{p(z)}
{q(z)},
\end{equation}
where\begin{align*}
p(z)&={\omega_1(z)\big(1+z^{2^m}\big)\dots\big(1+z^{2^n}+\alpha_1 z^{2^{n-1}}\big)(1+\omega_2(z))\big(1+z^{2^{m+1}}\big)}\\&\quad{}+(1-t)\omega_2(z)\big(1+z^{2^{n+1}}\big)\big(1+z^{2^{m}}+\alpha_2 z^{2^{m-1}}\big)(1+\omega_1(z)),\\\intertext{and}q(z)&={\big(1+z^{2^m}\big)\dots\big(1+z^{2^n}+\alpha_1 z^{2^{n-1}}\big)(1+\omega_2(z))\big(1+z^{2^{m+1}}\big)}\\&\quad{}+(1-t)\big(1+z^{2^{n+1}}\big)\big(1+z^{2^{m}}+\alpha_2 z^{2^{m-1}}\big)(1+\omega_1(z)).
\end{align*}
\end{lemma}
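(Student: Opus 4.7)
The plan is to compute the dilatation of the convex combination directly from the definition, using the explicit expressions for the derivatives $h'_{\alpha_i,n}$, $g'_{\alpha_i,n}$ that are forced by the shear identity \eqref{p27e}. The content is entirely algebraic; no complex-analytic subtlety arises because the bounds $|\omega_i|<1$ are already assumed.

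First I would set $\tilde{h}=th_{\alpha_1,n}+(1-t)h_{\alpha_2,m}$ and $\tilde{g}=tg_{\alpha_1,n}+(1-t)g_{\alpha_2,m}$, so that $f=\tilde{h}+\overline{\tilde{g}}$ and, using $g'_{\alpha_i,\cdot}=\omega_i\,h'_{\alpha_i,\cdot}$,
\[
\tilde{\omega}(z)=\frac{\tilde{g}'(z)}{\tilde{h}'(z)}=\frac{t\,\omega_1(z)\,h'_{\alpha_1,n}(z)+(1-t)\,\omega_2(z)\,h'_{\alpha_2,m}(z)}{t\,h'_{\alpha_1,n}(z)+(1-t)\,h'_{\alpha_2,m}(z)}.
\]
Next I would differentiate the defining relation for $h_{\alpha,n}+g_{\alpha,n}$, which is \eqref{p27e}, and combine with $g'_{\alpha_i,n}=\omega_i h'_{\alpha_i,n}$ to solve
\[
h'_{\alpha_i,n}(z)=\frac{(1+z^2)(1+z^4)\cdots(1+z^{2^n}+\alpha_i z^{2^{n-1}})}{(1+\omega_i(z))(1+z^{2^{n+1}})},
\]
with the analogous formula (with $n$ replaced by $m$ and $\alpha_1$ by $\alpha_2$) for $h'_{\alpha_2,m}$. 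Substituting these explicit expressions into the formula for $\tilde{\omega}$ and then multiplying numerator and denominator by the common factor $(1+\omega_1(z))(1+\omega_2(z))(1+z^{2^{n+1}})(1+z^{2^{m+1}})$ clears all denominators and produces a rational function whose numerator and denominator still share the common polynomial factor $\prod_{k=1}^{m-1}(1+z^{2^k})$.

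Cancelling this common factor — which is exactly where the hypothesis $n\geq m$ is used, since it ensures that the initial $m-1$ unperturbed factors of the product defining $h'_{\alpha_2,m}+g'_{\alpha_2,m}$ are contained in the product defining $h'_{\alpha_1,n}+g'_{\alpha_1,n}$ — leaves $(1+z^{2^m})(1+z^{2^{m+1}})\cdots(1+z^{2^n}+\alpha_1 z^{2^{n-1}})$ in place of the full length-$n$ product and $(1+z^{2^m}+\alpha_2 z^{2^{m-1}})$ in place of the full length-$m$ product. The resulting expression is precisely the $p(z)/q(z)$ asserted in \eqref{p2eq9}. I do not expect any genuine obstacle in this argument; the only step that requires care rather than cleverness is identifying the common product $\prod_{k=1}^{m-1}(1+z^{2^k})$ and carrying out its cancellation cleanly, since that is the one place where the asymmetric indices $n\geq m$ must be handled explicitly.
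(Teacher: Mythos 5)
Your proposal is correct and coincides with the paper's own proof: both write $\tilde{\omega}=\bigl(t\omega_1 h'_{\alpha_1,n}+(1-t)\omega_2 h'_{\alpha_2,m}\bigr)/\bigl(t h'_{\alpha_1,n}+(1-t)h'_{\alpha_2,m}\bigr)$, solve the differentiated shear relation for $h'_{\alpha_1,n}$ and $h'_{\alpha_2,m}$, and substitute, with your explicit cancellation of the common factor $(1+z^2)(1+z^4)\cdots(1+z^{2^{m-1}})$ being exactly the step the paper leaves implicit in ``we get the desired result.'' One minor remark: your computation correctly retains the factor $t$ multiplying the first terms of $p$ and $q$, which the lemma's printed statement drops --- a typo in the paper, as the $n=m$ specialization \eqref{p2eq11} confirms.
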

\begin{proof}
As $f=t f_{\alpha_1,n}+(1-t)f_{\alpha_2,m}=th_{\alpha_1,n}+(1-t)h_{\alpha_2,m}+t\bar{g}_{\alpha_1,n}+(1-t)\bar{g}_{\alpha_2,m}$, $ \omega_1=g'_{\alpha_1,n}/h'_{\alpha_1,n}$ and  $ \omega_2=g'_{\alpha_2,m}/h'_{\alpha_2,m},$  therefore the dilatation $\tilde{\omega}$ of the function $f$ is given by
\begin{equation}\label{p2eq10}
\tilde{\omega}=\frac{tg'_{\alpha_1,n}+(1-t)g'_{\alpha_2,m}}{th'_{\alpha_1,n}+(1-t)h'_{\alpha_2,m}}=\frac{t\omega_1h'_{\alpha_1,n}+(1-t)\omega_2h'_{\alpha_2,m}}{th'_{\alpha_1,n}+(1-t)h'_{\alpha_2,m }}.
\end{equation}
Also, we have\[h_{\alpha_1,n}(z)+g_{\alpha_1,n}(z)=\frac{z(1+z^2)(1+z^4)\dots\big(1+z^{2^n}+\alpha_1 z^{2^{n-1}}\big)}{\big(1+z^{2^{n+1}}\big)}*\log{\frac{1}{(1-z)}}\]Differentiating the above equation, and using $ \omega_1(z)=g'_{\alpha_1,n}(z)/h'_{\alpha_1,n}(z)$, we get
\[h'_{\alpha_1,n}(z)=\frac{(1+z^2)(1+z^4)\dots\big(1+z^{2^n}+\alpha_1 z^{2^{n-1}})}{(1+\omega_1(z))\big(1+z^{2^{n+1}}\big)}.\]Similarly, we see \[h'_{\alpha_2,m}(z)=\frac{(1+z^2)(1+z^4)\dots\big(1+z^{2^m}+\alpha_2 z^{2^{m-1}}\big)}{(1+\omega_2(z))\big(1+z^{2^{m+1}}\big)}.\]Now, using the above expressions for $h'_{\alpha_1,n}$ and $h'_{\alpha_2,m}$ in \eqref{p2eq10}, we get the desired  result.
\end{proof}
 For $n=m$ in Lemma \ref{p2theom11}, \eqref{p2eq9} reduces to
\begin{equation}\label{p2eq11}
\tilde{\omega}=\frac{t\omega_1\big(1+z^{2^n}+\alpha_1 z^{2^{n-1}}\big)(1+\omega_2)+(1-t)\omega_2\big(1+z^{2^n}+\alpha_2 z^{2^{n-1}}\big)(1+\omega_1)}{t(1+\omega_2)\big(1+z^{2^n}+\alpha_1 z^{2^{n-1}}\big)+(1+\omega_1)(1-t)\big(1+z^{2^n}+\alpha_2 z^{2^{n-1}}\big)}.
\end{equation}
\begin{theorem}\label{p2theom12}
For $i=1,2$ and $n\in\mathbb{N}$, let the function $f_{i,n}=h_{i,n}+\overline{g}_{i,n}$ be the normalized harmonic mapping such that $h_{i,n}(z)+g_{i,n}(z)=\big(z(1+z^2)(1+z^4)\dots(1+z^{2^n}+\alpha z^{2^{n-1}})/(1+z^{2^{n+1}})\big)*\log{1/(1-z)},$   $\alpha\in[-2(\sqrt{2}-1),2(\sqrt{2}-1)] $ and having dilatation $ \omega_i=g'_{i,n}/h'_{i,n}.$ If $ |\omega_i(z)|<1$ in  $\mathbb{D}$, then  the  convex combination $f=tf_{1,n}+(1-t)f_{2,n}$, $0\leq t\leq 1$ belongs to $\mathcal{S}_H$ and is convex in the direction of imaginary axis.
\end{theorem}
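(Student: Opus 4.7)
The plan is to reduce to Theorem~\ref{p2theom10}, for which it suffices to show that the convex combination $f=tf_{1,n}+(1-t)f_{2,n}$ is locally univalent and sense-preserving, i.e.\ that its dilatation $\tilde{\omega}$ satisfies $|\tilde{\omega}(z)|<1$ on $\mathbb{D}$. Local univalence together with the conclusion of Theorem~\ref{p2theom10} then delivers both $f\in\mathcal{S}_H$ and the required convexity.

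First, I would specialize Lemma~\ref{p2theom11} to the case $m=n$ and $\alpha_1=\alpha_2=\alpha$, so that formula~\eqref{p2eq11} applies. The decisive observation is that the factor $1+z^{2^n}+\alpha z^{2^{n-1}}$ appears in every summand of both the numerator and the denominator and therefore cancels, and the dilatation collapses to
\[
\tilde{\omega}(z)=\frac{t\omega_1(z)+(1-t)\omega_2(z)+\omega_1(z)\omega_2(z)}{1+t\omega_2(z)+(1-t)\omega_1(z)}.
\]
All dependence on $\alpha$ and $n$ has vanished, so the task reduces to the purely algebraic assertion: whenever $u,v\in\mathbb{D}$ and $t\in[0,1]$, one has $|tu+(1-t)v+uv|<|1+tv+(1-t)u|$.

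The heart of the argument is the identity
\[
|1+tv+(1-t)u|^{2}-|tu+(1-t)v+uv|^{2}=t(1-|u|^{2})|1+v|^{2}+(1-t)(1-|v|^{2})|1+u|^{2}.
\]
I would obtain this by expanding both squared moduli, noting that the mixed terms $t(1-t)(u\bar{v}+\bar{u}v)$ cancel between the two expansions, and then recognizing $1+|v|^{2}+2\RE(v)=|1+v|^{2}$ (and similarly for $u$) to consolidate the surviving real-linear-in-$u$ and real-linear-in-$v$ pieces into the displayed sum of two non-negative quantities. Setting $u=\omega_1(z)$ and $v=\omega_2(z)$ with $|u|,|v|<1$, each summand on the right is non-negative and they vanish simultaneously only at $u=v=-1$, which cannot occur inside $\mathbb{D}$; hence $|\tilde{\omega}(z)|<1$ throughout $\mathbb{D}$.

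With $|\tilde{\omega}|<1$ on $\mathbb{D}$ in hand, $f$ is locally univalent and sense-preserving, and Theorem~\ref{p2theom10} then yields $f\in\mathcal{S}_H$ together with convexity in the direction of the imaginary axis. The main obstacle is arranging the algebraic expansion so that both the cross-term cancellation and the sum-of-squares structure become visible at the same time; once the displayed identity is in place, positivity is immediate from $u,v\in\mathbb{D}$, and no further properties of $\omega_1,\omega_2$ beyond $|\omega_i|<1$ are used.
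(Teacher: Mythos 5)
Your proposal is correct, and structurally it is the same proof as the paper's: reduce via Theorem~\ref{p2theom10} to checking local univalence, set $\alpha_1=\alpha_2=\alpha$ in \eqref{p2eq11}, cancel the common factor $1+z^{2^n}+\alpha z^{2^{n-1}}$ from numerator and denominator, and arrive at exactly the dilatation
\[
\tilde{\omega}(z)=\frac{t\omega_1(z)+(1-t)\omega_2(z)+\omega_1(z)\omega_2(z)}{1+t\omega_2(z)+(1-t)\omega_1(z)}
\]
displayed in the paper. The one genuine divergence is the final step: the paper disposes of the bound $|\tilde{\omega}|<1$ by citing \cite[Theorem 3]{7} (Theorem~\ref{p2theom6} above), inside whose proof this same M\"obius-type expression is estimated, whereas you prove the needed inequality from scratch. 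Your key identity
\[
|1+tv+(1-t)u|^{2}-|tu+(1-t)v+uv|^{2}=t\big(1-|u|^{2}\big)|1+v|^{2}+(1-t)\big(1-|v|^{2}\big)|1+u|^{2}
\]
checks out by direct expansion: the mixed terms $2t(1-t)\RE(u\bar{v})$ do cancel, and the remaining terms consolidate as you describe, so for $u,v\in\mathbb{D}$ the right side is strictly positive and $|\tilde{\omega}|<1$ follows (note also that positivity of the difference automatically rules out a vanishing denominator). What your route buys is self-containment: the paper's proof is opaque at precisely this point, since the reader must extract the relevant estimate from the proof of Wang \emph{et al.}'s theorem rather than its statement, while your identity makes the mechanism visible and isolates the exact hypothesis used, namely $|\omega_i|<1$ and nothing else. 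One small wording slip: for $t=0$ or $t=1$ one of the two summands carries a zero coefficient, so ``both vanish simultaneously only at $u=v=-1$'' applies as stated only for $0<t<1$; in the endpoint cases positivity comes from the single surviving term, which vanishes only at $u=-1$ (respectively $v=-1$), again impossible in $\mathbb{D}$. This does not affect correctness.
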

\begin{proof}
In view of the Theorem \ref{p2theom10}, we only need to show that the function $f$ is locally univalent and sense-preserving. Let $\tilde{\omega}$ be the dilatation of the function $f$. Setting $\alpha_1=\alpha_2=\alpha$ in \eqref{p2eq11}, we get\[\tilde{\omega}(z)=\frac{t\omega_1(z)+(1-t)\omega_2(z)+\omega_1(z)\omega_2(z)}{1+t\omega_2(z)+(1-t)\omega_1(z)}.\]Therefore, from \cite[Theorem 3]{7}, we get $|\tilde{\omega}|<1$. Hence  the function $f$ is locally univalent and sense-preserving.
\end{proof}
In the next three theorems, we examine the convexity in the direction of imaginary axis of the convex combinations of functions $f_{\alpha,n}$, having different dilatations.
\begin{theorem}\label{p2theom13}
For $i=1,2$ and $n\in\mathbb{N},$ let the function $f_{\alpha_i,n}=h_{\alpha_i,n}+\overline{g}_{\alpha_i,n}$ be normalized harmonic mapping satisfying $h_{\alpha_i,n}(z)+g_{\alpha_i,n}(z)=\big(z(1+z^2)(1+z^4)\dots(1+z^{2^n}+\alpha_i z^{2^{n-1}})/(1+z^{2{n+1}})\big)*\log{1/(1-z)}$, $ \alpha_i\in[-2(\sqrt{2}-1),2(\sqrt{2}-1)] $. If $\omega_1(z)=-z^{2^{n-1}}$ and $\omega_2(z)=z^{2^{n-1}}$ are dilatations respectively of the functions $f_{\alpha_1,n}$ and $f_{\alpha_2,n}$, then the  convex combination $f=tf_{\alpha_1,n}+(1-t)f_{\alpha_2,n}$, $0\leq t\leq 1$ belongs to $\mathcal{S}_H$ and is convex in the direction of imaginary axis provided $\alpha_1\geq\alpha_2.$
\end{theorem}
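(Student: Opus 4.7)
The overall strategy is to apply Theorem \ref{p2theom10}, which reduces the problem to showing that the convex combination $f$ is locally univalent and sense-preserving; equivalently, that its dilatation $\tilde{\omega}$ satisfies $|\tilde{\omega}(z)|<1$ on $\mathbb{D}$.

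To obtain a usable expression for $\tilde{\omega}$, I would invoke Lemma \ref{p2theom11} with $m=n$ (that is, formula \eqref{p2eq11}), substitute $\omega_1(z)=-z^{2^{n-1}}$ and $\omega_2(z)=z^{2^{n-1}}$, and simplify with the shorthand $w=z^{2^{n-1}}$ and $A_i:=1+w^2+\alpha_i w$. The common factor $w$ appearing in both numerator terms but absent from the denominator is precisely what provides the eventual gain in modulus, and the formula reduces to
\[\tilde{\omega}(z)=w\cdot\frac{(1-t)A_2(1-w)-tA_1(1+w)}{tA_1(1+w)+(1-t)A_2(1-w)}.\]

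Setting $P=tA_1(1+w)$ and $Q=(1-t)A_2(1-w)$, the quotient on the right is $(Q-P)/(P+Q)$, i.e.\ the Cayley-type map $\eta\mapsto(1-\eta)/(1+\eta)$ evaluated at $\eta=P/Q$. Because that map sends the closed right half-plane into the closed unit disk, a sufficient condition for $|\tilde{\omega}(z)|\leq|w|<1$ is that $\RE(P\overline{Q})\geq 0$ on $\mathbb{D}$. Using the identity $A_1-A_2=(\alpha_1-\alpha_2)w$, a short computation produces the clean relation $\IM(A_1\overline{A_2})=(\alpha_1-\alpha_2)(1-|w|^2)\IM w$, and then
\[\RE(P\overline{Q})=t(1-t)(1-|w|^2)\bigl[\RE(A_1\overline{A_2})-2(\alpha_1-\alpha_2)(\IM w)^2\bigr],\]
so the whole question reduces to the pointwise polynomial inequality
\[\RE(A_1\overline{A_2})\geq 2(\alpha_1-\alpha_2)(\IM w)^2,\qquad w\in\mathbb{D}.\]

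The main obstacle is this last polynomial inequality. The plan is to expand
\[\RE(A_1\overline{A_2})=1+|w|^4+\alpha_1\alpha_2|w|^2+2\bigl((\RE w)^2-(\IM w)^2\bigr)+(\alpha_1+\alpha_2)(1+|w|^2)\RE w,\]
and then, exploiting both $\alpha_1\geq\alpha_2$ and $\alpha_i\in[-2(\sqrt{2}-1),2(\sqrt{2}-1)]$, regroup the resulting polynomial in $\RE w,\IM w$ as a sum of manifestly non-negative pieces—in the same spirit as the argument displayed earlier in the paper in the derivation of \eqref{p2eqa}, where the specific endpoint $2(\sqrt{2}-1)$ allowed a related polynomial to be split into three non-negative summands. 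Once this positivity is established, $\RE(P\overline{Q})\geq 0$ follows, hence $|\tilde{\omega}|<1$ on $\mathbb{D}$, and Theorem \ref{p2theom10} immediately delivers the conclusion that $f\in\mathcal{S}_H$ and is convex in the direction of the imaginary axis.
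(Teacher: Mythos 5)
Your reduction is sound as far as it goes: invoking Theorem \ref{p2theom10}, specializing \eqref{p2eq11} with $\omega_1(z)=-z^{2^{n-1}}$, $\omega_2(z)=z^{2^{n-1}}$ and $w=z^{2^{n-1}}$, the identity $\IM\big(A_1\overline{A_2}\big)=(\alpha_1-\alpha_2)(1-|w|^2)\IM w$, and the resulting formula $\RE\big(P\overline{Q}\big)=t(1-t)(1-|w|^2)\big[\RE\big(A_1\overline{A_2}\big)-2(\alpha_1-\alpha_2)(\IM w)^2\big]$ all check out. But the one step you leave as a ``plan''---the pointwise inequality $\RE\big(A_1\overline{A_2}\big)\geq 2(\alpha_1-\alpha_2)(\IM w)^2$ on $\mathbb{D}$---is not a routine regrouping in the spirit of \eqref{p2eqa}: under the stated hypotheses it is \emph{false}. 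Take $\alpha_1=2(\sqrt{2}-1)=-\alpha_2$ (admissible, with $\alpha_1\geq\alpha_2$) and $w=iy$: then $\RE\big(A_1\overline{A_2}\big)=1+y^4-\big(2+4(\sqrt{2}-1)^2\big)y^2\to-4(3-2\sqrt{2})<0$ as $y\to1$, while the right-hand side tends to $8(\sqrt{2}-1)>0$. So no decomposition into non-negative pieces can exist, and your argument cannot be completed at exactly its load-bearing step.

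The failure is not merely an artifact of your sufficient condition being stronger (it amounts to $|\tilde{\omega}|\leq|w|$ rather than $|\tilde{\omega}|<1$): for $t=1/2$ and these same $\alpha_i$ the denominator of $\tilde{\omega}$ reduces, up to a nonvanishing factor, to $1+(1+\alpha_1)w^2$, which vanishes at $w=i/\sqrt{1+\alpha_1}\in\mathbb{D}$ while the numerator $-w^2(1+\alpha_1+w^2)$ does not; so $\tilde{\omega}$ has a pole in $\mathbb{D}$ and the theorem \emph{as printed} is itself false. The hypothesis must read $\alpha_1\leq\alpha_2$. This is in fact what the paper's own proof delivers: it is a pure recognition argument with no positivity estimate---after putting $w=z^{2^{n-1}}$ it identifies $\tilde{\omega}$ with the dilatation in Theorem \ref{p2theom7} (i.e.\ \cite[Theorem 2.7]{8}) ``with $-2\alpha_i$ replaced by $\alpha_i$''; since $\alpha_i$ here corresponds to $-\alpha_i/2$ there, the condition $\alpha_1\geq\alpha_2$ of Theorem \ref{p2theom7} transforms into $\alpha_1\leq\alpha_2$ (compare Theorem \ref{p2theom15}, where the paper does state $\alpha_1\leq\alpha_2$). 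Under the corrected hypothesis your half-plane route is worth pursuing---the problematic term $2(\alpha_1-\alpha_2)(\IM w)^2$ becomes nonpositive, and your target inequality is then equivalent to the bound $|\tilde{\omega}|\leq|w|$ implicit in the cited result---and it would be a genuinely different, self-contained alternative to the paper's citation. As written, however, the proposal has a genuine gap: the key inequality is both unproven and, under the hypothesis $\alpha_1\geq\alpha_2$ you adopt, unprovable.
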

\begin{proof}
In view of Theorem \ref{p2theom10}, we only need to show that the function $f$ is locally univalent and sense-preserving. Let $\tilde{\omega}$ be the dilatation of the function $f$. Using $\omega_1(z)=-z^{2^{n-1}}$ and $\omega_2(z)=z^{2^{n-1}}$ in \eqref{p2eq11}, we get\[\tilde{\omega}(z)=\left(\frac{\splitfrac{-tz^{2^{n-1}}\big(1+z^{2^n}+\alpha_1 z^{2^{n-1}}\big)\big(1+z^{2^{n-1}}\big)}{+(1-t)z^{2^{n-1}}\big(1+z^{2^n}+\alpha_2 z^{2^{n-1}}\big)\big(1-z^{2^{n-1}}\big)}}{\splitfrac{t\big(1+z^{2^{n-1}}\big)\big(1+z^{2^n}+\alpha_1 z^{2^{n-1}}\big)}{+(1-t)\big(1-z^{2^{n-1}}\big)\big(1+z^{2^n}+\alpha_2 z^{2^{n-1}}\big)}}\right).\] Put $z^{2^{n-1}}=w$ in above equation, we get\begin{align}
\tilde{\omega}(z)&=\frac{-tw(1+w^2+\alpha_1 w)(1+w)+(1-t)w(1+w^2+\alpha_2 w)(1-w)}{t(1+w)(1+w^2+\alpha_1 w)+(1-t)(1-w)(1+w+\alpha_2 w)}\notag\\&=-w\frac{w^3+(2t-1+\alpha_1 t+\alpha_2(1-t))w^2+(1-\alpha_2(1-t)+\alpha_1t)w+(2t-1)}{(2t-1)w^3+(1-\alpha_2(1-t)+\alpha_1t)w^2+(2t-1+\alpha_1 t+\alpha_2(1-t))w+1},\notag
\end{align}
 which is the dilatation $\omega(w)$ of the function $f$ in the Theorem \ref{p2theom7}, with $-2\alpha_i$ replaced by $\alpha_i$, see  \cite[Theorem 2.7]{8}. Therefore, $|\tilde{\omega}|<1$. Hence the function $f$ is locally univalent and sense-preserving.
\end{proof}
\begin{theorem}\label{p2theom14}
For $i=1,2$ and $n\in\mathbb{N},$ let the function $f_{\alpha_i,n}=h_{\alpha_i,n}+\overline{g}_{\alpha_i,n}$ be the normalized harmonic mapping satisfying $h_{\alpha,n}(z)+g_{\alpha,n}(z)=\big(z(1+z^2)(1+z^4)\dots(1+z^{2^n}+\alpha z^{2^{n-1}})/(1+z^{2{n+1}})\big)*\log{1/(1-z)}, \alpha_i\in[-2(\sqrt{2}-1),2(\sqrt{2}-1)] $. If $\omega_1=-z^{2^{n-1}}$ and $\omega_2(z)$ $(|\omega_2|<1)$ are dilatations respectively of $f_{\alpha_1,n}$ and $f_{\alpha_2,n}$, then for the   convex combination $f=tf_{\alpha_1,n}+(1-t)f_{\alpha_2,n},0\leq t\leq 1,$ we have
\begin{itemize}
\item[(1)]If $\omega_2(z)=-z^{2^n}$ and $\alpha_1>\alpha_2,$ then the function $f$ belongs to $\mathcal{S}_H$ and is convex in the direction of imaginary axis.
\item[(2)]If $\omega_2(z)=z^{2^n}$ and $|\alpha_1|>|\alpha_2|,$ and $\alpha_1\alpha_2\geq0,$ then the function $f$ belongs to $\mathcal{S}_H$ and is convex in the direction of imaginary axis.
\end{itemize}
\end{theorem}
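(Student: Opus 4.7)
The strategy is identical to the one used in the proof of Theorem \ref{p2theom13}. Theorem \ref{p2theom10} tells us that convexity of $f=tf_{\alpha_1,n}+(1-t)f_{\alpha_2,n}$ in the direction of the imaginary axis follows as soon as $f$ is shown to be locally univalent and sense-preserving, which in turn reduces to the condition $|\tilde{\omega}(z)|<1$ on $\mathbb{D}$ for the dilatation $\tilde{\omega}$ of $f$. Thus in both parts the entire task is to bound $|\tilde{\omega}|$ by one.

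For either part, I would start by plugging the prescribed $\omega_1(z)=-z^{2^{n-1}}$ and $\omega_2$ (either $-z^{2^n}$ or $z^{2^n}$) into the dilatation formula \eqref{p2eq11}. Since $\omega_1$, $\omega_2$ and the factors $1+z^{2^n}+\alpha_i z^{2^{n-1}}$ depend on $z$ only through $z^{2^{n-1}}$ and $z^{2^n}=(z^{2^{n-1}})^2$, the change of variable $w=z^{2^{n-1}}$ converts $\tilde{\omega}(z)$ into a rational function in $w$. A modest amount of factoring (looking for a common $(1-w)$ factor in numerator and denominator, as in the proof of Theorem \ref{p2theom13}) brings the expression into exactly the form of the dilatation obtained for the convex combination of two mappings in the Kumar et al.\ family with dilatations $-w$ and $-w^2$ in part~(1), or $-w$ and $w^2$ in part~(2), under the familiar identification in which the quadratic $1+w^2+\alpha_i w$ plays the role of $1-2\alpha_i w+w^2$ used in Theorem \ref{p2theom8}.

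Under this identification the hypothesis $\alpha_1>\alpha_2$ of part~(1) corresponds to the hypothesis of Theorem \ref{p2theom8}(1), and the hypotheses $|\alpha_1|>|\alpha_2|$ together with $\alpha_1\alpha_2\ge 0$ of part~(2) correspond to those of Theorem \ref{p2theom8}(2). Consequently Theorem \ref{p2theom8} delivers $|\tilde{\omega}(w)|<1$ on $\mathbb{D}$, and since $|z|<1$ forces $|w|=|z|^{2^{n-1}}<1$, we obtain $|\tilde{\omega}(z)|<1$ on $\mathbb{D}$. Theorem \ref{p2theom10} then completes the argument in both cases.

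The main technical obstacle is the algebraic reduction step: one has to be careful to identify exactly which common factor cancels between numerator and denominator after the substitution, and to keep track of the sign conventions so that the reduced expression in $w$ agrees verbatim with the one handled in Theorem \ref{p2theom8}. Apart from this bookkeeping, no additional analytic ingredients are needed; the work is a disciplined exercise in polynomial manipulation followed by an appeal to the already-established result of Kumar et al.
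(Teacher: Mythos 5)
Your proposal is correct and is essentially the paper's own proof: the paper likewise reduces the claim via Theorem \ref{p2theom10} to the bound $|\tilde{\omega}|<1$, substitutes $\omega_1(z)=-z^{2^{n-1}}$ and $\omega_2(z)=\mp z^{2^{n}}$ into \eqref{p2eq11}, sets $w=z^{2^{n-1}}$, and identifies the resulting expression with the dilatation treated in Theorem \ref{p2theom8}, ``with $-2\alpha_i$ replaced by $\alpha_i$,'' exactly as you describe. Two minor remarks: no common $(1-w)$ factor actually needs to be cancelled (after the substitution the expression agrees verbatim with the Kumar et al.\ dilatation, since $1+\omega_1=1-w$ and $1+\omega_2=1\mp w^2$ appear in both), and the correspondence $\alpha_i\leftrightarrow -2\alpha_i$ reverses the order of the parameters, so the claim that $\alpha_1>\alpha_2$ ``corresponds to'' the hypothesis of Theorem \ref{p2theom8}(1) deserves more scrutiny than either you or the paper gives it --- but this gloss is identical in both, so your attempt matches the published argument.
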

\begin{proof}
In view of Theorem \ref{p2theom10}, we only need to show that the function $f$ is locally univalent and sense-preserving. Let $\tilde{\omega}$ be the  dilatation of the function $f$. Using $\omega_1(z)=-z^{2^{n-1}}$ and $\omega_2(z)=-z^{2^n}$ in \eqref{p2eq11}, we get\[\tilde{\omega}(z)=\left(\frac{\splitfrac{-tz^{2^{n-1}}(1+z^{2^n}+\alpha_1 z^{2^{n-1}})(1-z^{2^n})}{-(1-t)z^{2^n}(1+z^{2^n}+\alpha_2 z^{2^{n-1}})(1-z^{2^{n-1}})}}{\splitfrac{t(1-z^{2^n})(1+z^{2^n}+\alpha_1 z^{2^{n-1}})}{+(1-t)(1-z^{2^{n-1}})(1+z^{2^n}+\alpha_2 z^{2^{n-1}})}}\right).\] Put $z^{2^{n-1}}=w$ , above equation gives \[\tilde{\omega}(z)=\frac{-tw(1+w^2+\alpha_1 w)(1-w^2)-(1-t)w^2(1+w^2+\alpha_2 w)(1-w)}{t(1-w^2)(1+w^2+\alpha_1 w)+(1-t)(1-w)(1+w+\alpha_2 w)},\]
  which is the dilatation $\omega(w)$ of the function $f$ in the Theorem \ref{p2theom8}, with $-2\alpha_i$ replaced by  $\alpha_i$. Therefore, $|\tilde{\omega}|<1$. Hence the function $f$ is locally univalent and sense-preserving. Part $(2)$  follows similarly.
\end{proof}
\begin{theorem}\label{p2theom15}
For $i=1,2$ and $n\in\mathbb{N}-\left\lbrace1\right\rbrace,$ let the function $f_{\alpha_i,n}=h_{\alpha_i,n}+\overline{g}_{\alpha_i,n},$ be the normalized harmonic mapping satisfying $h_{\alpha,n}(z)+g_{\alpha,n}(z)=\big(z(1+z^2)(1+z^4)\dots(1+z^{2^n}+\alpha z^{2^{n-1}})/(1+z^{2{n+1}})\big)*\log{1/(1-z)}$, $\alpha_i\in[-2(\sqrt{2}-1),2(\sqrt{2}-1)] $. If $\omega_1(z)=-z^{2^{n-2}}$ and $\omega_2(z)=z^{2^{n-1}}$ are dilatations respectively of functions $f_{\alpha_1,n}$ and $f_{\alpha_2,n}$, then the  convex combination $f=tf_{\alpha_1,n}+(1-t)f_{\alpha_2,n}$, $0\leq t\leq 1$ belongs to $\mathcal{S}_H$ and is convex in the direction of imaginary axis, provided $\alpha_1\leq\alpha_2.$
\end{theorem}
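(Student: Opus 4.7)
The strategy follows the template set up in Theorems \ref{p2theom13}--\ref{p2theom14}. By Theorem \ref{p2theom10}, it suffices to prove that $f$ is locally univalent and sense-preserving, which in turn reduces to showing $|\tilde{\omega}(z)|<1$ on $\mathbb{D}$ for the dilatation $\tilde{\omega}$ of $f$.

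To compute $\tilde{\omega}$, I will plug $\omega_1(z)=-z^{2^{n-2}}$ and $\omega_2(z)=z^{2^{n-1}}$ into formula \eqref{p2eq11} (the $m=n$ case of Lemma \ref{p2theom11}). The hypothesis $n\in\mathbb{N}\setminus\{1\}$ is precisely what ensures $2^{n-2}$ is a positive integer, so the substitution $w=z^{2^{n-2}}$ is an analytic self-map of $\mathbb{D}$; under it $\omega_1=-w$, $\omega_2=w^2$, $z^{2^n}=w^4$, $z^{2^{n-1}}=w^2$, and each factor $1+z^{2^n}+\alpha_i z^{2^{n-1}}$ becomes $1+w^4+\alpha_i w^2$. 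Thus $\tilde{\omega}(z)$ reduces to an explicit rational function $\Phi(w)$ with $|w|<1$ whenever $|z|<1$.

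The final step is to show $|\Phi(w)|<1$ on $\mathbb{D}$. As in the proofs of Theorems \ref{p2theom13} and \ref{p2theom14}, I expect to recognize $\Phi(w)$, after the now-familiar reparameterization $-2\alpha_i\mapsto\alpha_i$, as the dilatation of one of the convex combinations already shown to be bounded in \cite{8}. The reparameterization flips the inequality $\alpha_1\ge\alpha_2$ into $\alpha_1\le\alpha_2$, and Theorem \ref{p2theom7} of this paper (Theorem~2.7 of \cite{8}) has exactly the former hypothesis, strongly suggesting it as the target result. Once that identification is achieved, $|\tilde{\omega}(z)|=|\Phi(w)|<1$, and Theorem \ref{p2theom10} completes the proof.

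The main obstacle is the algebraic identification: whereas in Theorems \ref{p2theom13}--\ref{p2theom14} the substitution produced the quadratic factors $1+w^2+\alpha w$ matching \cite{8} directly, here one encounters quartic factors $1+w^4+\alpha_i w^2$, and rather involved expansion and regrouping---perhaps exploiting the identity $(1-w)(1+w)(1+w^2)=1-w^4$ together with the relation $\omega_2=\omega_1^2$ that holds after substitution---will be needed to massage the numerator and denominator of $\Phi(w)$ into the exact form of a known dilatation from \cite{8}. Once that polynomial manipulation is carried out, the remainder of the proof is a direct appeal to the earlier bound.
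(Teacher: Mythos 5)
Your reduction is faithful to the paper's opening moves: the appeal to Theorem \ref{p2theom10}, the use of \eqref{p2eq11}, and the substitution $w=z^{2^{n-2}}$ (legitimate since $n\geq2$), giving
\[\tilde{\omega}=\frac{-tw(1+w^4+\alpha_1 w^2)(1+w^2)+(1-t)w^2(1+w^4+\alpha_2 w^2)(1-w)}{t(1+w^2)(1+w^4+\alpha_1 w^2)+(1-t)(1-w)(1+w^4+\alpha_2 w^2)}.\]
But the step you defer as ``expansion and regrouping'' is the entire content of the theorem, and the identification you hope for does not exist. After the substitution the relevant data are $\omega_1=-w$, $\omega_2=w^2$ with \emph{quartic} factors $1+w^4+\alpha_i w^2$; in Theorems \ref{p2theom13} and \ref{p2theom14} the substituted data were $(-w,w)$ and $(-w,\mp w^2)$ with \emph{quadratic} factors $1+w^2+\alpha_i w$, which is exactly the configuration of \cite[Theorems 2.7, 2.9]{8} up to the reparameterization $-2\alpha_i\mapsto\alpha_i$. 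Your proposed target, Theorem \ref{p2theom7}, has second dilatation $w$, not $w^2$, so it cannot match; the one result in \cite{8} with $\omega_2=w^2$, namely Theorem \ref{p2theom8}(2), carries the incompatible hypotheses $|\alpha_1|>|\alpha_2|$, $\alpha_1\alpha_2\geq0$ and still only quadratic factors. A degree count closes the door: here $\tilde{\omega}=-w\,p(w)/p^*(w)$ with $p(w)=w^6+(t-1)w^5+(\alpha_2+(1+\alpha_1-\alpha_2)t)w^4+\alpha_2(t-1)w^3+(1+\alpha_1 t)w^2+(t-1)w+t$ a genuine sextic, while the dilatations in Theorems \ref{p2theom13}--\ref{p2theom14} are $-w$ times ratios of lower-degree polynomials. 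Even the natural factorization $1+w^4+\alpha w^2=\big(1+w^2+\sqrt{2-\alpha}\,w\big)\big(1+w^2-\sqrt{2-\alpha}\,w\big)$ does not help, since the two quadratic factors carry opposite-signed linear terms and nothing cancels between numerator and denominator; your observation $\omega_2=\omega_1^2$ likewise produces no simplification of this kind.

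What the paper actually does, and what your proposal is missing, is a direct zero-location argument. It first disposes of the degenerate cases ($\alpha_1=\alpha_2$ by Theorem \ref{p2theom12}; $t=0,1$ by the discussion preceding Theorem \ref{p2theom10}), then, for $0<t<1$ and $\alpha_1<\alpha_2$, shows all six zeros of $p$ lie in $\mathbb{D}$ by three successive applications of Cohn's rule (Lemma \ref{p2lem7}) --- producing polynomials $p_1,p_2,p_3$ of degrees $5,4,3$ --- followed by Rouch\'e's theorem on the cubic $p_3$, via the estimate $2(1-t)(\alpha_2-\alpha_1)+|\alpha_1+3\alpha_2+(\alpha_1-\alpha_2)t|<8+(\alpha_1-\alpha_2)(1+t)$, which uses $|\alpha_i|\leq2(\sqrt{2}-1)$. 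Note also why the boundary cases must be split off: the Cohn reductions extract the factors $t$ and $(\alpha_1-\alpha_2)(1+t)$, and the admissibility conditions $|t|<1$, $|1-t|<|1+t|$, $|4+(\alpha_1-\alpha_2)(1+t)|<4$ require the strict inequalities $0<t<1$ and $\alpha_1<\alpha_2$. So your outline stalls exactly where the real work begins; to complete it you must carry out this Cohn--Rouch\'e analysis (or an equivalent bound on the sextic), not an appeal to \cite{8}.
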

\begin{proof}
 For $\alpha_1=\alpha_2$, the result is proved in Theorem \ref{p2theom12}, so we consider the case $\alpha_1<\alpha_2$. Also, for $t=0,1$, the result has been already shown in the discussion proceeding the  Theorem \ref{p2theom8}, so we will prove it for the case $0<t<1$. Let $\tilde{\omega}$ be the dilatation of the function $f$. Using $\omega_1(z)=-z^{2^{n-2}}$ and $\omega_2(z)=z^{2^{n-1}}$ in \eqref{p2eq11}, we get \[\tilde{\omega}(z)=\left(\frac{\splitfrac{-tz^{2^{n-2}}(1+z^{2^n}+\alpha_1 z^{2^{n-1}})(1+z^{2^{n-1}})}{+(1-t)z^{2^{n-1}}(1+z^{2^n}+\alpha_2 z^{2^{n-1}})(1-z^{2^{n-2}})}}{\splitfrac{t(1+z^{2^{n-1}})(1+z^{2^n}+\alpha_1 z^{2^{n-1}})}{+(1-t)(1-z^{2^{n-2}})(1+z^{2^n}+\alpha_2 z^{2^{n-1}})}}\right).\] Take  $z^{2^{n-2}}=w$, above equation gives
\begin{align*}
\tilde{\omega}(z)&=\frac{-tw(1+w^4+\alpha_1 w^2)(1+w^2)+(1-t)w^2(1+w^4+\alpha_2 w^2)(1-w)}{t(1+w^2)(1+w^4+\alpha_1 w^2)+(1-t)(1-w)(1+w^4+\alpha_2 w^2)},\\&=-w\frac{p(w)}{p^*(w)},\\\intertext{where}p(w)&=w^6+(t-1)w^5+(\alpha_2+(1+\alpha_1-\alpha_2)t)w^4+\alpha_2(t-1)w^3\\&\quad+(1+\alpha_1t)w^2+(t-1)w+t\\\intertext{and}p^*(w)&=w^6\overline{p}(1/\overline{w}).
\end{align*}
Therefore, if $w_1,w_2,w_3,w_4,w_5,w_6$ are zeros of $p(z)$, then $1/w_1,1/w_2,1/w_3,1/w_4,1/w_5,1/w_6$ are zeros the of the polynomial $p^*(w)$, and we can write $\tilde{\omega}$ as\[\tilde{\omega}(z)=-w\frac{(w-w_1)(w-w_2)(w-w_3)(w-w_4)(w-w_5)(w-w_6)}{(1-\overline{w}_1w)(1-\overline{w}_2w)(1-\overline{w}_3w)(1-\overline{w}_4w)(1-\overline{w}_5w)(1-\overline{w}_6w)}.\]Thus, to show that $|\tilde{\omega}(z)|<1$, it is enough to show $|w_i|<1$, $i=1,2,\dots,6.$\\ Consider the polynomial $q_1$ given by
  \begin{align*}
  q_1(w)&=\frac{p(w)-tp^*(w)}{w}\\&=(1-t^2)w^5-(1-t)^2w^4+(1-t)(\alpha_2+\alpha_1t)w^3-\alpha_2(1-t)^2w^2\\&\quad+(1-t)(1+(1+\alpha_1-\alpha_2)t)w-(1-t)^2\\&=(1-t)p_1(z), \intertext{where}p_1(z)&=(1+t)w^5-(1-t)w^4+(\alpha_2+\alpha_1t)w^3-\alpha_2(1-t)w^2\\&\quad+(1+(1+\alpha_1-\alpha_2)t)w-(1-t).
\end{align*}
Since $|t|<1$, by Cohn's rule the number of zeros of the polynomial  $p_1$ in  $\mathbb{D}$ is one less than that of the polynomial $p$.
Again, consider the polynomial $q_2$ given by
\begin{align*}
q_2(w)&=\frac{(1+t)p_1(w)+(1-t)p_1^*(w)}{w}\\&=4tw^4+t(1-t)(\alpha_1-\alpha_2)w^3+(\alpha_1+3\alpha_2+(\alpha_1-\alpha_2)t)tw^2\\&\quad+(\alpha_1-\alpha_2)t(1-t)w+(4+(\alpha_1-\alpha_2)(1+t))t\\&=tp_2(w),\\\intertext{where}p_2(w)&=4w^4+(1-t)(\alpha_1-\alpha_2)w^3+(\alpha_1+3\alpha_2+(\alpha_1-\alpha_2)t)w^2\\&\quad+(\alpha_1-\alpha_2)(1-t)w+4+(\alpha_1-\alpha_2)(1+t).
\end{align*}
Since $|1-t|<|1+t|$, by Cohn's rule the number of zeros of the polynomial $p_2$ in $\mathbb{D}$ is one less than that of the polynomial $p_1$. Also, consider the polynomial $q_3$ given by
\begin{align*}
q_3(w)&=\frac{4p_2(w)-(4+(\alpha_1-\alpha_2)(1+t))p^*_2(z)}{w}\\&=-(\alpha_1-\alpha_2)(1+t)\lbrace(8+(\alpha_1-\alpha_2)(1+t))w^3+(\alpha_1-\alpha_2)(1-t)w^2\\&\quad+(\alpha_1+3\alpha_2+(\alpha_1-\alpha_2)t)w+(\alpha_1-\alpha_2)(1-t)\rbrace\\&=-(\alpha_1-\alpha_2)(1+t)p_3(w),\\\intertext{where}p_3(w)&=(8+(\alpha_1-\alpha_2)(1+t))w^3+(\alpha_1-\alpha_2)(1-t)w^2\\&\quad+(\alpha_1+3\alpha_2+(\alpha_1-\alpha_2)t)w+(\alpha_1-\alpha_2)(1-t).
\end{align*}
 Since $|4+(\alpha_1-\alpha_2)(1+t)|<4$, by Cohn's rule the number of zeros of the polynomial $p_3$ in  $\mathbb{D}$ is one less than that of the polynomial $p_2$. Now, for the zeros of $p_3$  in $\mathbb{D}$, consider
 \begin{align*}
 |p_3(w)-(8&+(\alpha_1-\alpha_2)(1+t))w^3|\\&=|(\alpha_1-\alpha_2)(1-t)w^2+(\alpha_1+3\alpha_2+(\alpha_1-\alpha_2)t)w+(\alpha_1-\alpha_2)(1-t)|\\&\leq2|(1-t)(\alpha_1-\alpha_2)|+|\alpha_1+3\alpha_2+(\alpha_1-\alpha_2)t|\\&=2(1-t)(\alpha_2-\alpha_1)+|\alpha_1+3\alpha_2+(\alpha_1-\alpha_2)t|\\&<(8+(\alpha_1-\alpha_2)(1+t)).
 \end{align*}
 Thus, $|p_3(w)-(8+(\alpha_1-\alpha_2)(1+t))w^3|<|(8+(\alpha_1-\alpha_2)(1+t))w^3|$ on $|w|=1$, by Rouche's theorem all the three zeros of $p_3(w)$ lie in $\mathbb{D}$. Hence, we see that all the six zeros of the polynomial $p$ lie in $\mathbb{D}$. Therefore, $|\tilde{\omega}|<1$. The result now follows from Theorem \ref{p2theom10}.
\end{proof}
Upon looking at the Theorems \ref{p2theom3}, \ref{p2theom4} and \ref{p2theom5}, we can propose the following questions.
\begin{itemize}
\item[(1)] To find the values of $a\in(-1,1)$ and $\theta\in\mathbb{R}$, such that the result in the Theorem \ref{p2theom3} holds if we take $\omega(z)=\emph{e}^{\textit{i}\theta}(a-z^n)/(1-az^n),n=3,4,\dots.$ \\
 \item[(2)] To find the values of $a\in(-1,1)$ and $\theta\in\mathbb{R},$ such that the result in the Theorem \ref{p2theom4} holds if we take $\omega(z)=\emph{e}^{\textit{i}\theta}(a-z^n)/(1-az^n),n=3,4,\dots$.\\
\item[(3)] To find the values of $a\in(-1,1)$ and $\theta\in\mathbb{R},$ such that the result in the Theorem \ref{p2theom5} holds if we take $\omega(z)=\emph{e}^{\textit{i}\theta}(a-z)^n/(1-az)^n,n=3,4,\dots$.\end{itemize}

\end{document}